\tikzstyle arrowstyle=[draw]
\tikzstyle directed=[postaction={decorate,decoration={markings,
mark=at position 0 with {\node[circle,fill=black,inner sep=0.3ex] {};},
mark=at position 0.5 with {\arrow[scale=1.2,arrowstyle]{stealth};},
mark=at position 1 with {\node[circle,fill=black,inner sep=0.3ex] {};},
}}]
\newtheorem{theorem}{Theorem}
\newtheorem{corollary}[theorem]{Corollary}
\newtheorem{lemma}[theorem]{Lemma}
\newtheorem{observation}[theorem]{Observation}
\newtheorem{conjecture}[theorem]{Conjecture}
\newcommand{\an}{an}
\newcommand{\upword}{upword}
\newcommand{\upwords}{upwords}
\newcommand{\ol}{\overline}
\author{Herman~Z.~Q.~Chen\affiliationmark{1}
  \and Sergey~Kitaev\affiliationmark{2}
  \and Torsten~M\"{u}tze\affiliationmark{3}
  \and Brian Y. Sun\affiliationmark{4}\thanks{The last author was supported by the Scientific Research Program of the Higher Education Institution of Xinjiang Uygur Autonomous Region (No.~XJEDU2016S032) and the Natural Science Foundation of Xinjiang University.}}
\title[On universal partial words]{On universal partial words}
\affiliation{
School of Science, Tianjin Chengjian University, P.R.\ China \\
Department of Computer and Information Sciences, University of Strathclyde, Glasgow, UK \\
Institut f\"{u}r Mathematik, TU Berlin, Germany \\
College of Mathematics and System Science, Xinjiang University, Urumqi, Xinjiang 830046, P.R.\ China}
\keywords{universal word, partial word, De Bruijn graph, Eulerian cycle, Hamiltonian cycle}
\begin{document}
\publicationdetails{19}{2017}{1}{16}{2025}
\maketitle
\begin{abstract}
A {\em universal word} for a finite alphabet $A$ and some integer $n\geq 1$ is a word over $A$ such that every word in $A^n$ appears exactly once as a subword (cyclically or linearly).
It is well-known and easy to prove that universal words exist for any $A$ and $n$.
In this work we initiate the systematic study of universal {\em partial} words.
These are words that in addition to the letters from $A$ may contain an arbitrary number of occurrences of a special `joker' symbol $\Diamond\notin A$, which can be substituted by any symbol from $A$.
For example, $u=0\Diamond 011100$ is a linear partial word for the binary alphabet $A=\{0,1\}$ and for $n=3$ (e.g., the first three letters of $u$ yield the subwords $000$ and $010$).
We present results on the existence and non-existence of linear and cyclic universal partial words in different situations (depending on the number of $\Diamond$s and their positions), including various explicit constructions.
We also provide numerous examples of universal partial words that we found with the help of a computer. \\
\end{abstract}

\section{Introduction}
\label{sec:intro}

De Bruijn sequences are a centuries-old and well-studied topic in combinatorics, and over the years they found widespread use in real-world applications, e.g., in the areas of molecular biology \cite{compeau:11}, computer security \cite{MR653429}, computer vision \cite{pscf:05}, robotics \cite{scheinerman:01} and psychology experiments \cite{sbsh:97}.
More recently, they have also been studied in a more general context by constructing {\em universal cycles} for other fundamental combinatorial structures such as permutations or subsets of a fixed ground set (see e.g.~\cite{MR2638827, MR1197444, MR2925746, MR3193758}).

In the context of words over a finite alphabet $A$, we say that a word $u$ is {\em universal for $A^n$} if $u$ contains every word of length $n\geq 1$ over $A$ exactly once as a subword.
We distinguish \emph{cyclic universal words} and \emph{linear universal words}.
In the cyclic setting, we view $u$ as a cyclic word and consider all subwords of length $n$ cyclically across the boundaries of $u$.
In the linear setting, on the other hand, we view $u$ as a linear word and only consider subwords that start and end within the index range of letters of $u$.
From this definition it follows that the length of a cyclic or linear universal word must be $|A|^n$ or $|A|^n+n-1$, respectively.
For example, for the binary alphabet $A=\{0,1\}$ and for $n=3$, $u=0001011100$ is a linear universal word for $A^3$.
Observe that a cyclic universal word for $A^n$ can be easily transformed into a linear universal word for $A^n$, so existence results in the cyclic setting imply existence results for the linear setting.
Note also that reversing a universal word, or permuting the letters of the alphabet yields a universal word again.
The following classical result is the starting point for our work (see \cite{de-bruijn:46,MR0142475,MR0276108}).

\begin{theorem}
\label{thm:universal}
For any finite alphabet $A$ and any $n\geq 1$, there exists a cyclic universal word for $A^n$.
\end{theorem}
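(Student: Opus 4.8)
The plan is to reformulate the existence of a cyclic universal word as the existence of an Eulerian cycle in a suitable directed graph, and then to invoke the classical theorem of Euler. Define the \emph{De Bruijn graph} $G=G(A,n)$ to have vertex set $A^{n-1}$ (the words of length $n-1$ over $A$, with the empty word as the unique vertex when $n=1$), and to contain, for every word $w=a_1a_2\cdots a_n\in A^n$, one directed edge labelled $w$ from the vertex $a_1\cdots a_{n-1}$ to the vertex $a_2\cdots a_n$. Thus $G$ has exactly $|A|^n$ edges, one per element of $A^n$.

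First I would set up the correspondence between cyclic words and closed walks in $G$. Given a cyclic word $u=u_0u_1\cdots u_{m-1}$ over $A$ (indices taken modulo $m$), the length-$n$ subword of $u$ starting at position $i$ is the label of an edge $e_i$ of $G$, and the head of $e_i$ equals the tail of $e_{i+1}$, so $e_0,e_1,\dots,e_{m-1}$ is a closed walk in $G$. Conversely, a closed walk in $G$ spells out a cyclic word by appending, at each step, the last letter of the next edge's label. Under this correspondence, each word of $A^n$ occurs exactly once as a length-$n$ subword of $u$ if and only if the associated walk is an \emph{Eulerian cycle}, i.e.\ a closed walk traversing every edge exactly once; and in that case $m=|A|^n$, matching the required length.

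Next I would check the hypotheses of Euler's theorem for directed multigraphs. Every vertex $v=b_1\cdots b_{n-1}$ has out-degree $|A|$, namely the edges $b_1\cdots b_{n-1}a$ for $a\in A$, and in-degree $|A|$, namely the edges $ab_1\cdots b_{n-1}$ for $a\in A$; hence in-degree equals out-degree everywhere. For strong connectivity, from any vertex one reaches $c_1\cdots c_{n-1}$ by following the $n-1$ edges that successively shift the letters $c_1,\dots,c_{n-1}$ into the vertex word. By Euler's theorem, $G$ therefore admits an Eulerian cycle, and reading it off as above produces a cyclic universal word for $A^n$.

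There is no serious obstacle here: the only substantive ingredient is Euler's theorem itself, which I would cite rather than reprove. The points requiring a little care are purely routine — the degenerate case $n=1$, where $G$ consists of a single vertex with $|A|$ loops and is trivially Eulerian, and the index book-keeping in the walk--word dictionary of the second paragraph.
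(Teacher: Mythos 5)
Your proof is correct and follows essentially the same route as the paper: both reduce the existence of a cyclic universal word to an Eulerian cycle in the De Bruijn graph on $A^{n-1}$ (the paper phrases this as a Hamiltonian cycle in $G_A^n$, which is the line graph of $G_A^{n-1}$), verified via balanced in-/out-degrees and strong connectivity together with Euler's theorem. No gaps; your explicit treatment of the $n=1$ case and the walk--word correspondence is just a more careful write-up of the same standard argument.
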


The standard proof of Theorem~\ref{thm:universal} is really beautiful and concise, using the De Bruijn graph, its line graph and Eulerian cycles (see \cite{MR1197444} and Section~\ref{sec:prelim} below).

\subsection{Universal partial words}

In this paper we consider the universality of {\em partial words}, which are words that in addition to letters from $A$ may contain any number of occurrences of an additional special symbol $\Diamond\notin A$. The idea is that every occurrence of $\Diamond$ can be substituted by any symbol from $A$, so we can think of $\Diamond$ as a `joker' or `wildcard' symbol.
Formally, we define $A_\Diamond:=A\cup\{\Diamond\}$ and we say that a word $v=v_1v_2\cdots v_n\in A^n$ appears as a {\em factor} in a word $u=u_1u_2\cdots u_m\in A_\Diamond^m$ if there is an integer $i$ such that $u_{i+j}=\Diamond$ or $u_{i+j}=v_j$ for all $j=1,2,\ldots,n$.
In the cyclic setting we consider the indices of $u$ in this definition modulo $m$.
For example, in the linear setting and for the ternary alphabet $A=\{0,1,2\}$, the word $v=021$ occurs twice as a factor in $u=120\Diamond 120021$ because of the subwords $0\Diamond 1$ and $021$ of $u$, whereas $v$ does not appear as a factor in $u'=12\Diamond 11\Diamond$.

Partial words were introduced in \cite{MR1687780}, and they too have real-world applications (see \cite{blanchet-sadri:08} and references therein).
In combinatorics, partial words appear in the context of primitive words \cite{blanchet-sadri:05}, of (un)avoidability of sets of partial words \cite{MR2511569,MR2779632}, and also in the study of the number of squares~\cite{MR2456602} and overlap-freeness~\cite{MR2492032} in (infinite) partial words.
The concept of partial words has been extended to pattern-avoiding permutations in \cite{MR2770130}.

The notion of universality given above extends straightforwardly to partial words, and we refer to a universal partial word as \an{} {\em \upword{}} for short.
Again we distinguish cyclic \upwords{} and linear \upwords{}.
The simplest example for a linear \upword{} for $A^n$ is $\Diamond^n:=\Diamond\Diamond\cdots\Diamond$, the word consisting of $n$ many $\Diamond$s, which we call {\em trivial}.
Let us consider a few more interesting examples of linear \upwords{} over the binary alphabet $A=\{0,1\}$.
We have that $\Diamond\Diamond 0111$ is a linear \upword{} for $A^3$, whereas $\Diamond\Diamond 01110$ is {\em not} a linear \upword{} for $A^3$, because replacing the first two letters $\Diamond\Diamond$ by $11$ yields the same factor $110$ as the last three letters.
Similarly, $0\Diamond 1$ is {\em not} a linear \upword{} for $A^2$ because the word $10\in A^2$ does not appear as a factor (and the word $01\in A^2$ appears twice as a factor).

\subsection{Our results}

In this work we initiate the systematic study of universal partial words.
It turns out that these words are rather shy animals, unlike their ordinary counterparts (universal words without `joker' symbols).
That is, in stark contrast to Theorem~\ref{thm:universal}, there are no general existence results on \upwords{}, but also many non-existence results.
The borderline between these two cases seems rather complicated, which makes the subject even more interesting (this is true also for non-binary alphabets, as the constructions of the follow-up paper \cite{kirsch:16} indicate).
In addition to the size of the alphabet $A$ and the length $n$ of the factors, we also consider the number of $\Diamond$s and their positions in \an{} \upword{} as problem parameters.

\begin{table}[ht!]
\begin{center}
\begin{tabular}{c|c|l}
$n$ & $k$ & \\ \hline
1   & 1 & $\Diamond$ \\ \hline
2   & 1 & $\Diamond 011$ (Thm.~\ref{thm:diamond-at-pos-1}, Thm.~\ref{thm:nm1-diamonds}) \\
    & 2 & --- (Thm.~\ref{thm:diamond-at-pos-n}) \\ \hline
3   & 1 & $\Diamond 00111010$ (Thm.~\ref{thm:diamond-at-pos-1}) \\
    & 2 & $0\Diamond 011100$ (Thm.~\ref{thm:diamond-at-pos-k}) \\
    & 3 & --- (Thm.~\ref{thm:diamond-at-pos-n}) \\
    & 4 & --- (Thm.~\ref{thm:diamond-at-pos-57}) \\ \hline
4   & 1 & $\Diamond 00011110100101100$ (Thm.~\ref{thm:diamond-at-pos-1}) \\
    & 2 & $0\Diamond 010011011110000$ (Thm.~\ref{thm:diamond-at-pos-k}) \\
    & 3 & $01\Diamond 0111100001010$ (Thm.~\ref{thm:diamond-at-pos-k}) \\
    & 4 & --- (Thm.~\ref{thm:diamond-at-pos-n}) \\
    & 5 & --- (Thm.~\ref{thm:diamond-at-pos-57}) \\
    & 6 & $01100\Diamond 011110100$ \\
    & 7 & --- (Thm.~\ref{thm:diamond-at-pos-57}) \\
    & 8 & $0011110\Diamond 0010110$ \\ \hline
5   & 1 & $\Diamond 0000111110111001100010110101001000$ (Thm.~\ref{thm:diamond-at-pos-1}) \\
    & 2 & $0\Diamond 01011000001101001110111110010001$ (Thm.~\ref{thm:diamond-at-pos-k}) \\
    & 3 & $01\Diamond 011000001000111001010111110100$ (Thm.~\ref{thm:diamond-at-pos-k}) \\
    & 4 & $011\Diamond 0111110000010100100011010110$ (Thm.~\ref{thm:diamond-at-pos-k}) \\
    & 5 & --- (Thm.~\ref{thm:diamond-at-pos-n}) \\
    & 6 & $00101\Diamond 0010011101111100000110101$ \\
    & 7 & $010011\Diamond 010000010101101111100011$ \\
    & 8 & $0100110\Diamond 01000001110010111110110$ \\
    & 9 & $01110010\Diamond 0111110110100110000010$ \\
    & 10 & $010011011\Diamond 010001111100000101011$ \\
    & 11 & $0101000001\Diamond 01011111001110110001$ \\
    & 12 & $01010000011\Diamond 0101101111100010011$ \\
    & 13 & $001001101011\Diamond 001010000011111011$ \\
    & 14 & $0011101111100\Diamond 00110100010101100$ \\
    & 15 & $01010000010011\Diamond 0101101111100011$ \\
    & 16 & $001000001101011\Diamond 001010011111011$
\end{tabular}
\caption{Examples of linear \upwords{} for $A^n$, $A=\{0,1\}$, with a single $\Diamond$ at position $k$ from the beginning or end for $n=1,2,3,4,5$ and all possible values of $k$ (\upwords{} where the $\Diamond$ is closer to the end of the word than to the beginning can be obtained by reversal). A dash indicates that no such \upword{} exists.}
\label{tab:upwords1}
\end{center}
\end{table}

\begin{table}[ht!]
\begin{center}
\begin{tabular}{l|l}
$n$   & \\\hline
2     & $\Diamond\Diamond$ (Cor.~\ref{cor:dia-dia}) \\ \hline
3     & $\Diamond\Diamond 0111$ (Cor.~\ref{cor:dia-dia}, Thm.~\ref{thm:nm1-diamonds}) \\
      & $\Diamond 001011\Diamond$ \\ \hline
4     & $\Diamond 00011\Diamond 1001011$ (Thm.~\ref{thm:two-diamonds}) \\
      & $\Diamond 0001011\Diamond 10011$ \\
      & $001\Diamond 110\Diamond 001$ \\ \hline
5     & $\Diamond 0100\Diamond 101011000001110111110010$ \\
      & $\Diamond 0000111\Diamond 100010010101100110111$ (Thm.~\ref{thm:two-diamonds}) \\
      & $\Diamond 00001001\Diamond 10001101011111011001$ \\
      & $\Diamond 0000100111\Diamond 100011001010110111$ \\
      & $\Diamond 00001010111\Diamond 10001101100100111$ \\
      & $0\Diamond 0011\Diamond 0100010101101111100000$ \\
      & $0\Diamond 010110\Diamond 00011101111100100110$ \\
      & $0\Diamond 0101110\Diamond 0001101100100111110$ \\
      & $0\Diamond 010111110\Diamond 00011011001001110$ \\
      & $0\Diamond 0101101110\Diamond 0001100100111110$ \\
      & $00\Diamond 0011\Diamond 00101011011111010000$ \\
      & $01\Diamond 01100101110\Diamond 0100000111110$ \\
      & $01\Diamond 0110010111110\Diamond 01000001110$ \\
      & $01\Diamond 0100000101011000111110\Diamond 110$ \\
      & $001\Diamond 0101\Diamond 001110111110000010$ \\
      & $011\Diamond 011010010\Diamond 0111110000010$ \\
      & $011\Diamond 0110101001000\Diamond 011111000$ \\
      & $011\Diamond 0111110001101010000010\Diamond 10$ \\
      & $011\Diamond 011010000011111000100101\Diamond 1$ \\
      & $01001\Diamond 1110\Diamond 010000011011001$
\end{tabular}
\caption{Examples of linear \upwords{} for $A^n$, $A=\{0,1\}$, with two $\Diamond$s for $n=2,3,4,5$.}
\label{tab:upwords2}
\end{center}
\end{table}

We first focus on linear \upwords{}.
For linear \upwords{} containing a {\em single} $\Diamond$, we have the following results:
For non-binary alphabets $A$ (i.e., $|A|\geq 3$) and $n\geq 2$, there is {\em no} linear \upword{} for $A^n$ with a single $\Diamond$ at all (Theorem~\ref{thm:alpha3} below).
For the binary alphabet $A=\{0,1\}$, the situation is more interesting (see Table~\ref{tab:upwords1}):
Denoting by $k$ the position of the $\Diamond$, we have that for $n\geq 2$, there is {\em no} linear \upword{} for $A^n$ with $k=n$ (Theorem~\ref{thm:diamond-at-pos-n}), and there are {\em no} linear \upwords{} in the following three cases: $n=3$ and $k=4$, and $n=4$ and $k\in\{5,7\}$ (Theorem~\ref{thm:diamond-at-pos-57}).
We conjecture that these are the only non-existence cases for a binary alphabet (Conjecture~\ref{conj:single-diamond}).
To support this conjecture, we performed a computer-assisted search and indeed found linear \upwords{} for all values of $2\leq n\leq 13$ and all possible values of $k$ other than the ones excluded by the beforementioned results.
Some of these examples are listed in Table~\ref{tab:upwords1}, and the remaining ones are available on the third author's website \cite{www}.
We also prove the special cases $k=1$ and $k\in\{2,3,\ldots,n-1\}$ of our conjecture (Theorems~\ref{thm:diamond-at-pos-1} and \ref{thm:diamond-at-pos-k}, respectively).

For linear \upwords{} containing {\em two} $\Diamond$s we have the following results:
First of all, Table~\ref{tab:upwords2} shows examples of linear \upwords{} with two $\Diamond$s for the binary alphabet $A=\{0,1\}$ for $n=2,3,4,5$.
We establish a sufficient condition for non-existence of binary linear \upwords{} with two $\Diamond$s (Theorem~\ref{thm:two-diamonds-non}), which in particular shows that a $(1-o(1))$-fraction of all ways of placing two $\Diamond$s among the $N=\Theta(2^n)$ positions does not yield a valid \upword{}.
Moreover, we conclude that there are only two binary linear \upwords{} where the two $\Diamond$s are adjacent (Corollary~\ref{cor:dia-dia}), namely $\Diamond\Diamond$ for $n=2$ and $\Diamond\Diamond 0111$ for $n=3$ (see Table~\ref{tab:upwords2}).
We also construct an infinite family of binary linear \upwords{} with two $\Diamond$s (Theorem~\ref{thm:two-diamonds}).

Let us now discuss cyclic \upwords{}.
Note that the trivial solution $\Diamond^n$ is a cyclic \upword{} only for $n=1$.
For the cyclic setting we have the following rather general non-existence result:
If $\gcd(|A|,n)=1$, then there is no cyclic \upword{} for $A^n$ (Corollary~\ref{cor:cyclic-div}).
In particular, for a binary alphabet $|A|=2$ and odd $n$, there is no cyclic \upword{} for $A^n$.
In fact, we know only of a single cyclic \upword{} for the binary alphabet $A=\{0,1\}$ and any $n\geq 2$, namely $\Diamond 001\Diamond 110$ for $n=4$ (up to cyclic shifts, reversal and letter permutations).

\subsection{Outline of this paper}

This paper is organized as follows.
In Section~\ref{sec:prelim} we introduce some notation and collect basic observations that are used throughout the rest of the paper.
In Sections~\ref{sec:single-diamond} and \ref{sec:two-diamonds} we prove our results on linear \upwords{} containing a single or two $\Diamond$s, respectively.
Section~\ref{sec:cyclic} contains the proofs on cyclic \upwords{}.
Finally, Section~\ref{sec:outlook} discusses possible directions for further research, including some extensions of our results to non-binary alphabets.

\section{Preliminaries}
\label{sec:prelim}

For the rest of this paper, we assume w.l.o.g.\ that the alphabet is $A=\{0,1,\ldots,\alpha-1\}$, so $\alpha\geq 2$ denotes the size of the alphabet.
We often consider the special case $\alpha=2$ of the binary alphabet, and then for $x\in\{0,1\}$ we write $\ol{x}$ for the complement of $x$.
Moreover, for any word $u$, we let $|u|$ denote its length.
As we mentioned before, reversing a universal word and/or permuting the letters of the alphabet again yields a universal word.
We can thus assume w.l.o.g.\ that in \an{} \upword{} $u$ the letters of $A$ appear in increasing order from left to right, i.e., the first occurence of symbol $i$ is before the first occurence of symbol $j$ whenever $i<j$.
Moreover, if $u$ can be factored as $u=xyz$, where $x$ and $z$ do not contain any $\Diamond$s, then we can assume that $|x|\leq |z|$.

One standard approach to prove the existence of universal words is to define a suitable graph and to search for a Hamiltonian path/cycle in this graph (another more algebraic approach uses irreducible polynomials).
Specifically, the {\em De Bruijn graph} $G_A^n$ has as vertices all elements from $A^n$ (all words of length $n$ over $A$), and a directed edge from a vertex $u$ to a vertex $v$ whenever the last $n-1$ letters of $u$ are the same as the first $n-1$ letters of $v$.
We call such an edge $(u,v)$ an {\em $x$-edge}, if the last letter of $v$ equals $x$.
Figure~\ref{fig:bruijn} (a) and (b) shows the graph $G_A^n$, $A=\{0,1\}$, for $n=2$ and $n=3$, respectively.
Clearly, a linear universal word for $A^n$ corresponds to a Hamiltonian path in $G_A^n$, and a cyclic universal word to a Hamiltonian cycle in this graph.
Observe furthermore that $G_A^n$ is the line graph of $G_A^{n-1}$.
Recall that the {\em line graph} $L(G)$ of a directed graph $G$ is the directed graph that has a vertex for every edge of $G$, and a directed edge from $e$ to $e'$ if in $G$ the end vertex of $e$ equals the starting vertex of $e'$.
Therefore, the problem of finding a Hamiltonian path/cycle in $G_A^n$ is equivalent to finding an Eulerian path/cycle in $G_A^{n-1}$.
The existence of an Eulerian path/cycle follows from the fact that the De Bruijn graph is connected and that each vertex has in- and out-degree $\alpha$ (this is one of Euler's famous theorems \cite{euler}, see also \cite[Theorem 1.6.3]{bang-jensen-gutin:08}).
This proves Theorem~\ref{thm:universal}.
In fact, this existence proof can be easily turned into an algorithm to actually find (many) universal words (using Hierholzer's algorithm \cite{MR1509807} or Fleury's algorithm \cite{fleury}).

\begin{figure}[ht!]
\begin{center}
\mbox{
\hspace{-10mm}
\begin{tikzpicture}[scale=1.1]
\begin{scope}[shift={(0,2.0)}]
\draw[line width=4pt,color=lightgray]
(0.25,2.1) to [out=30,in=-90] (0.4,2.4)
to [out=90,in=0] (0.,2.8)
to [out=180,in=90] (-0.4,2.4)
to [out=-90,in=180] (0,2)--++(1,-1)--++(-0.8,-0.8)--++(0,-0.25)
to [out=0,in=90] (0.4,-0.4)
to [out=-90,in=0] (0,-0.8)
to [out=180,in=-90] (-0.4,-0.4)
to [out=90,in=200] (-0.2,-0.05)--++(0,0.25)--++(-0.8,0.8)--++(0.8,0.8);
\draw[line width=4pt,color=lightgray] (0.55,0.83) to [out=190,in=-10] (-0.55,0.83)--++(0,0.27) to [out=15,in=165] (1,1);

\node[right] at (0,0){$11$};
\node[left] at (-1,1){$10$};
\node[left] at (0,2){$00$};
\node[right] at (1,1){$01$};

\draw [thick,dashed,directed] (0,0)--++(-1,1);
\draw [thick,dashed,directed] (-1,1)--++(1,1);
\draw [thick,dashed,directed] (0,2)--++(1,-1);
\draw [thick,dashed,directed] (1,1)--++(-1,-1);
\draw [thick,dashed,directed] (-1,1) .. controls (0.5,1.3) and (-0.5,1.3) .. (1,1);
\draw [thick,dashed,directed] (1,1) .. controls (0.5,0.7) and (-0.5,0.7) .. (-1,1);
\draw [thick,dashed,directed] (0,2) arc (-90:270:0.4);
\draw [thick,dashed,directed] (0,0) arc (90:-270:0.4);
\node at (-1,3) {$G_A^2$};
\node at (0,-3) {(a)};
\end{scope}
\path (0,0);
\end{tikzpicture}
\begin{tikzpicture}[scale=1.1]
\node[right] at (0,0.5){$010$};
\node[left] at (-1,1){$100$};
\node[right] at (0,2){$000$};
\node[right] at (1,1){$001$};
\node[right] at (0,-0.5){$101$};
\node[left] at (-1,-1){$110$};
\node[right] at (0,-2){$111$};
\node[right] at (1,-1){$011$};

\draw [thick,dashed,directed] (-1,1)--++(2,0);
\draw [thick,dashed,directed] (-1,1)--++(1,1);
\draw [thick,dashed,directed] (1,1)--++(-1,-0.5);
\draw [thick,dashed,directed] (1,-1)--++(-2,0);
\draw [thick,dashed,directed] (-1,-1)--++(1,0.5);
\draw [thick,dashed,directed] (0,0.5)--++(-1,0.5);

\draw [thick,dashed,directed] (0,-0.5) .. controls (-0.2,0.2) and (-0.2,-0.2) .. (0,0.5);
\draw [thick,dashed,directed] (0,2) arc (-90:270:0.4);
\draw [thick,dashed,directed] (0,-2) arc (90:-270:0.4);

\draw [very thick,directed] (0,2)--++(1,-1);
\draw [very thick,directed] (1,1)--++(0,-2);
\draw [very thick,directed] (0,0.5) .. controls (0.2,0.2) and (0.2,-0.2) .. (0,-0.5);
\draw [very thick,directed] (0,-0.5)--++(1,-0.5);
\draw [very thick,directed] (-1,-1)--++(0,2);
\draw [very thick,directed] (1,-1)--++(-1,-1);
\draw [very thick,directed] (0,-2)--++(-1,1);

\node at (-0.5,3.3) {$G_A^3=L(G_A^2)$};
\node at (-1,-3) {(b)};
\end{tikzpicture}
\begin{tikzpicture}[scale=1.1]
\begin{scope}[shift={(0,2.7)}]
\draw [thick,directed] (-1.5,0.8)--++(1,0);
\draw [thick,directed] (-0.5,0.8)--++(0.5,-0.8);
\draw [thick,directed] (0,0)--++(1,0);
\draw [thick,directed] (1,0)--++(1,0);
\draw [thick,directed] (2,0)--++(1,0);
\draw [thick,directed] (-1.5,-0.8)--++(1,0);
\draw [thick,directed] (-0.5,-0.8)--++(0.5,0.8);

\draw [dashed] (-1.5,0) ellipse (0.3 and 1.05);
\draw [dashed] (-0.5,0) ellipse (0.3 and 1.05);

\node at (-1.5,1.3) {$S(u,1,n)$};
\node at (0,1.3) {$S(u,2,n)$};
\node at (0.5,0.3) {$S(u,3,n)$};
\node at (1,-0.3) {$S(u,4,n)$};
\node at (2.1,0.3) {$S(u,5,n)$};
\node at (3,-0.3) {$S(u,6,n)$};
\node at (0.5,-2) {$H(u,n)$, $u=0\Diamond 011100$};
\node at (0,-3) {(c)};
\end{scope}
\path (0,0);
\end{tikzpicture}
}
\caption{The De Bruijn graphs $G_A^2$ (a) and $G_A^3=L(G_A^2)$ (b) for $A=\{0,1\}$ with a spanning subgraph $H(u,n)$ of $G_A^3$ for the linear \upword{} $u=0\Diamond 011100$ for $A^3$ ($H(u,n)$ is shown by solid edges). Part (c) if the figure shows a schematic drawing of the graph $H(u,n)$. $H(u,n)$ is the line graph of the highlighted sequences of edges in $G_A^2$.}
\label{fig:bruijn}
\end{center}
\end{figure}
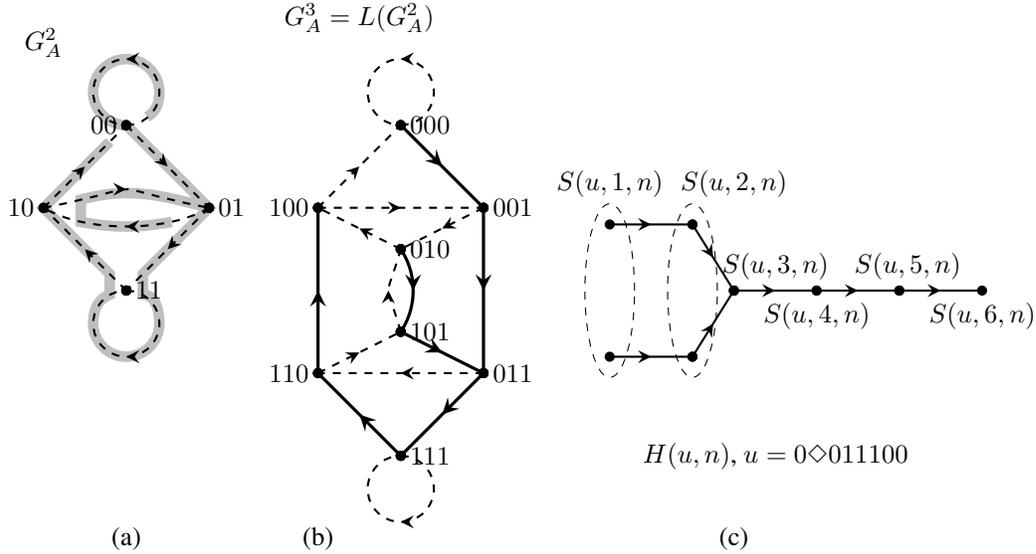

We now discuss how this standard approach of proving the existence of universal words can be extended to universal partial words.
Specifically, we collect a few simple but powerful observations that will be used in our proofs later on.

For any vertex $v$ of $G_A^n$, we let $\Gamma^+(v)$ and $\Gamma^-(v)$ denote the sets of out-neighbours and in-neighbours of $v$, respectively (both are sets of vertices of $G_A^n$).
As we mentioned before, we clearly have $|\Gamma^+(v)|=|\Gamma^-(v)|=\alpha$.

\begin{observation}
\label{obs:common-neighbors}
For any vertex $v=v_1v_2\cdots v_n$ of $G_A^n$ and its set of out-neighbours $\Gamma^+(v)$, there are $\alpha-1$ vertices different from $v$ with the same set of out-neighbours $\Gamma^+(v)$, given by $xv_2v_3\cdots v_n$, where $x\in A\setminus \{v_1\}$.
For any vertex $v=v_1v_2\cdots v_n$ of $G_A^n$ and its set of in-neighbours $\Gamma^-(v)$, there are $\alpha-1$ vertices different from $v$ with the same set of in-neighbours $\Gamma^-(v)$, given by $v_1v_2\cdots v_{n-1}x$, where $x\in A\setminus \{v_n\}$.
\end{observation}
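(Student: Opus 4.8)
The plan is to argue directly from the definition of the De Bruijn graph, computing both neighbourhood sets explicitly as sets of words and observing which coordinate of $v$ they do (or do not) depend on.

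First I would recall that, by definition of $G_A^n$, there is an edge from $v=v_1v_2\cdots v_n$ to a vertex $w=w_1w_2\cdots w_n$ exactly when $v_2v_3\cdots v_n=w_1w_2\cdots w_{n-1}$. Hence $\Gamma^+(v)=\{v_2v_3\cdots v_n x : x\in A\}$. The key point is that this description involves only the suffix $v_2v_3\cdots v_n$ of $v$ and not its first letter $v_1$. Therefore, for every $x\in A$, the vertex $xv_2v_3\cdots v_n$ has the identical out-neighbourhood $\Gamma^+(v)$. There are exactly $\alpha$ such vertices (one for each choice of $x$), precisely one of which, namely the one with $x=v_1$, equals $v$ itself; the remaining $\alpha-1$ are the claimed vertices $xv_2v_3\cdots v_n$ with $x\in A\setminus\{v_1\}$, and they are pairwise distinct since they differ in their first letter.

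The second statement is entirely symmetric: an edge from $w=w_1\cdots w_n$ to $v=v_1\cdots v_n$ requires $w_2\cdots w_n=v_1\cdots v_{n-1}$, so $\Gamma^-(v)=\{x v_1 v_2\cdots v_{n-1}:x\in A\}$, which depends only on the prefix $v_1v_2\cdots v_{n-1}$ and not on the last letter $v_n$. The same counting then yields the $\alpha-1$ vertices $v_1v_2\cdots v_{n-1}x$ with $x\in A\setminus\{v_n\}$.

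I do not anticipate any real obstacle here: the statement is a direct unwinding of the adjacency rule, and the only mild care needed is to note that the $\alpha$ candidate vertices are genuinely distinct (they differ in exactly one fixed coordinate) so that removing $v$ leaves exactly $\alpha-1$ of them.
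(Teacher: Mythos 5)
Your argument is correct and is exactly the direct unwinding of the adjacency rule of $G_A^n$ that the paper intends (the observation is stated without proof precisely because $\Gamma^+(v)$ depends only on the suffix $v_2\cdots v_n$ and $\Gamma^-(v)$ only on the prefix $v_1\cdots v_{n-1}$). Your added remark that the $\alpha$ candidates are pairwise distinct, so that exactly $\alpha-1$ remain after removing $v$, is the right small detail to check.
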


For any linear \upword{} $u$ for $A^n$, we define a spanning subgraph $H(u,n)$ of the De Bruijn graph $G_A^n$ as follows, see Figure~\ref{fig:bruijn} (c):
For any $i=1,2,\ldots,N-n+1$, we let $S(u,i,n)$ denote the set of all words that are obtained from the subword of $u$ of length $n$ starting at position $i$ by replacing any occurences of $\Diamond$ by a letter from the alphabet $A$.
Clearly, if there are $d$ many $\Diamond$s in this subword, then there are $\alpha^d$ different possibilities for substitution, so we have $|S(u,i,n)|=\alpha^d$.
Note that the sets $S(u,i,n)$ form a partition of the vertex set of $G_A^n$ (and $H(u,n)$).
The directed edges of $H(u,n)$ are given by all the edges of $G_A^n$ induced between every pair of consecutive sets $S(u,i,n)$ and $S(u,i+1,n)$ for $i=1,2,\ldots,N-n$.
For example, for the linear \upword{} $u=0\Diamond 011100$ over the binary alphabet $A=\{0,1\}$ for $n=3$ we have $S(u,1,n)=\{000,010\}$, $S(u,2,n)=\{001,101\}$, $S(u,3,n)=\{011\}$, $S(u,4,n)=\{111\}$, $S(u,5,n)=\{110\}$ and $S(u,6,n)=\{100\}$, and the spanning subgraph $H(u,n)$ of $G_A^3$ is shown in Figure~\ref{fig:bruijn} (c).
To give another example with the same $A$ and $n$, for the linear \upword{} $u=\Diamond\Diamond 0111$ we have $S(u,1,n)=\{000,010,100,110\}$, $S(u,2,n)=\{001,101\}$, $S(u,3,n)=\{011\}$, $S(u,4,n)=\{111\}$, and then $H(u,n)$ is a binary tree of depth 2 with an additional edge emanating from the root.

The following observation follows straightforwardly from these definitions.

\begin{observation}
\label{obs:Hu-degrees}
Let $u=u_1u_2\cdots u_N$ be a linear \upword{} for $A^n$.
A vertex in $S(u,i,n)$, $i=1,2,\ldots,N-n$, has out-degree $1$ in $H(u,n)$ if $u_{i+n}\in A$, and out-degree $\alpha$ if $u_{i+n}=\Diamond$.
A vertex in $S(u,i,n)$, $i=2,3,\ldots,N-n+1$, has in-degree $1$ in $H(u,n)$ if $u_{i-1}\in A$, and in-degree $\alpha$ if $u_{i-1}=\Diamond$.
The vertices in $S(u,1,n)$ have in-degree $0$, and the vertices in $S(u,N-n+1,n)$ have out-degree $0$.
\end{observation}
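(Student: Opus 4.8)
The plan is to simply unwind the three definitions involved --- that of the De Bruijn graph $G_A^n$, of the sets $S(u,i,n)$, and of the spanning subgraph $H(u,n)$ --- while keeping careful track of index arithmetic, namely that the $j$-th letter of a word belonging to $S(u,i,n)$ sits at position $i+j-1$ of $u$.

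First I would fix an index $i\in\{1,2,\ldots,N-n\}$ and a vertex $w=w_1w_2\cdots w_n\in S(u,i,n)$, and determine its out-edges in $H(u,n)$. Since $H(u,n)$ only contains edges running between consecutive sets $S(u,i,n)$ and $S(u,i+1,n)$, and among those exactly the edges already present in $G_A^n$, the out-neighbours of $w$ in $H(u,n)$ are precisely the out-neighbours of $w$ in $G_A^n$ that lie in $S(u,i+1,n)$. By definition of $G_A^n$, the out-neighbours of $w$ in $G_A^n$ are the $\alpha$ words $w_2w_3\cdots w_n x$ with $x\in A$, while $S(u,i+1,n)$ consists of exactly those $v\in A^n$ with $v_j=u_{i+j}$ for every $j\in\{1,2,\ldots,n\}$ with $u_{i+j}\neq\Diamond$. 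The key point is that, because $w\in S(u,i,n)$, we already have $w_{j+1}=u_{i+j}$ whenever $u_{i+j}\in A$ for $j=1,\ldots,n-1$; hence the first $n-1$ of the defining constraints of $S(u,i+1,n)$ are automatically met by $w_2\cdots w_n x$, regardless of $x$. The only remaining constraint concerns the last coordinate: if $u_{i+n}\in A$ then $x$ is forced to equal $u_{i+n}$, so $w$ has out-degree $1$; if $u_{i+n}=\Diamond$ then every choice $x\in A$ gives an out-neighbour in $S(u,i+1,n)$, so $w$ has out-degree $\alpha$.

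The claim about in-degrees is obtained by the mirror-image argument, using that the in-neighbours of $w$ in $G_A^n$ are the $\alpha$ words $x w_1 w_2\cdots w_{n-1}$ with $x\in A$, that the ones in $H(u,n)$ are exactly those lying in $S(u,i-1,n)$, and that compatibility of such a word with the length-$n$ subword of $u$ starting at position $i-1$ is automatic in all coordinates except the first, which is governed by whether $u_{i-1}\in A$ or $u_{i-1}=\Diamond$. The boundary statements are immediate: $H(u,n)$ has no set preceding $S(u,1,n)$, so every vertex there has in-degree $0$, and none following $S(u,N-n+1,n)$, so every vertex there has out-degree $0$. I do not expect any genuine obstacle here --- the content is purely definitional --- so the only thing to watch out for is the index bookkeeping described above, and I would keep the write-up correspondingly brief.
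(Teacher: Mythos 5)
Your proof is correct and is exactly the definitional unwinding the paper has in mind: the paper states this as an observation that ``follows straightforwardly from these definitions'' and gives no further argument, and your write-up (including the index bookkeeping and the implicit use of the fact that the sets $S(u,i,n)$ partition the vertex set, so each vertex's edges in $H(u,n)$ come only from its one pair of neighbouring sets) supplies precisely the omitted details. No issues.
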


By this last observation, the graph $H(u,n)$ is determined only by the positions of the $\Diamond$s in $u$.
Intuitively, the $\Diamond$s lead to branching in the graph $H(u,n)$ due to the different possibilities of substituting symbols from $A$.
In particular, if $u$ has no $\Diamond$s, then $H(u,n)$ is just a spanning path of $G_A^n$ (i.e., a Hamiltonian path, so we are back in the setting of Theorem~\ref{thm:universal}).
So when searching for a linear universal partial word $u$ with a particular number of $\Diamond$s at certain positions, we essentially search for a copy of the spanning subgraph $H(u,n)$ in $G_A^n$.
We will exploit this idea both in our existence and non-existence proofs.
For the constructions it is particularly useful (and for our computer-searches it is computationally much more efficient) to not search for a copy of $H(u,n)$ in $G_A^n$ directly, but to rather search for the corresponding sequences of edges in $G_A^{n-1}$, which can be seen as generalizations of Eulerian paths that were used before in the proof of Theorem~\ref{thm:universal} (see Figure~\ref{fig:bruijn} (a)).
For example, to search for a linear \upword{} $u$ with a single $\Diamond$ at position $k\in\{1,2,\ldots,n-1\}$, we can prescribe the first $k-1$ letters and the $n$ letters after the $\Diamond$ (with a particular choice of symbols from $A$, or by iterating over all possible choices), and search for an Eulerian path in the subgraph of $G_A^{n-1}$ that remains when deleting from it all edges that correspond to the prescribed prefix of $u$ (see the proofs of Theorems~\ref{thm:diamond-at-pos-1} and \ref{thm:diamond-at-pos-k} below).
This idea can be generalized straightforwardly to search for \upwords{} with other $\Diamond$ patterns (see for example the proof of Theorem~\ref{thm:two-diamonds} below).

The next lemma will be used repeatedly in our proofs (both for existence and non-existence of \upwords{}).
The proof uses the previous two graph-theoretical observations to derive dependencies between letters of \an{} \upword.

\begin{lemma}
\label{lem:constraint}
Let $u=u_1u_2\cdots u_N$ be a linear \upword{} for $A^n$, $A=\{0,1,\ldots,\alpha-1\}$, $n\geq 2$, such that $u_k=\Diamond$ and $u_{k+n}\neq \Diamond$ (we require $k+n\leq N$).
Then for all $i=1,2,\ldots,n-1$ we have that if $u_i\neq \Diamond$, then $u_{k+i}=u_i$.
Moreover, we have that if $u_n\neq \Diamond$, then $\alpha=2$ and $u_{k+n}=\ol{u_n}$.
\end{lemma}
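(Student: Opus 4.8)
The plan is to exploit the partition of $V(G_A^n)$ into the sets $S(u,i,n)$, together with Observations~\ref{obs:common-neighbors} and~\ref{obs:Hu-degrees}, by controlling where the in-neighbourhoods (in $G_A^n$) of the vertices of $S(u,k+1,n)$ can lie. The starting observation is that since $u_k=\Diamond$, Observation~\ref{obs:Hu-degrees} gives every $w\in S(u,k+1,n)$ in-degree $\alpha$ in $H(u,n)$, and by the definition of $H(u,n)$ every in-edge of $w$ in $H(u,n)$ starts in $S(u,k,n)$ (the sets $S(u,i,n)$ being pairwise disjoint). Since $w$ also has exactly $\alpha$ in-neighbours in $G_A^n$, this forces $\Gamma^-(w)\subseteq S(u,k,n)$ for every $w\in S(u,k+1,n)$. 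By Observation~\ref{obs:common-neighbors}, $\Gamma^-(w)=\{zy_1\cdots y_{n-1}:z\in A\}$ depends only on the length-$(n-1)$ prefix $y=y_1\cdots y_{n-1}$ of $w$; and since $u_{k+n}\neq\Diamond$, these prefixes $y$ range over exactly the $(n-1)$-grams satisfying ``$u_{k+i}=\Diamond$ or $u_{k+i}=y_i$'' for all $i=1,\dots,n-1$.

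Next I would fix such a prefix $y$ and any letter $x\in A$ with $x\neq u_{k+n}$ (which exists because $\alpha\geq 2$ and $u_{k+n}\in A$), and examine the $n$-gram $v:=y_1\cdots y_{n-1}x$. As $u_{k+n}\neq\Diamond$ and $x\neq u_{k+n}$, we have $v\notin S(u,k+1,n)$; the crucial claim is that nevertheless $v\in S(u,1,n)$. To prove it, say $v\in S(u,j,n)$. If $j\geq 2$, then by Observation~\ref{obs:Hu-degrees} the vertex $v$ has positive in-degree in $H(u,n)$, and any in-edge of $v$ there starts in $S(u,j-1,n)$; but $\Gamma^-(v)=\{zy_1\cdots y_{n-1}:z\in A\}\subseteq S(u,k,n)$ by the first paragraph, so disjointness of the sets $S(u,i,n)$ forces $j-1=k$. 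Since $j\neq k+1$, we conclude $j=1$, i.e.\ $v\in S(u,1,n)$, which by definition means $u_i=\Diamond$ or $u_i=y_i$ for all $i=1,\dots,n-1$, and $u_n=\Diamond$ or $u_n=x$.

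Finally I would read off the two parts of the lemma. If $u_i\neq\Diamond$ with $1\le i\le n-1$, then the previous paragraph gives $y_i=u_i$ for every admissible $y$; were $u_{k+i}=\Diamond$, we could take $y_i$ to be an arbitrary letter of $A$, contradicting $\alpha\geq 2$, so $u_{k+i}\neq\Diamond$ and hence $u_{k+i}=y_i=u_i$. If $u_n\neq\Diamond$, then $x=u_n$ is forced for every $x\in A\setminus\{u_{k+n}\}$, so $A\subseteq\{u_n,u_{k+n}\}$ and therefore $\alpha=2$; moreover $u_{k+n}\neq u_n$ (otherwise $\alpha=1$), so $u_{k+n}=\ol{u_n}$. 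I expect the most delicate point to be the middle step --- showing that the auxiliary $n$-gram $v$ cannot land in any window other than the first one --- and this is exactly where the in-degree information of Observation~\ref{obs:Hu-degrees} and the partition property of the sets $S(u,i,n)$ come into play.
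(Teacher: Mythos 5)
Your proof is correct and follows essentially the same route as the paper's: both show that for each $w\in S(u,k+1,n)$ the sibling vertices $y_1\cdots y_{n-1}x$ with $x\neq u_{k+n}$ must land in $S(u,1,n)$, and then read off the letter constraints; your derivation of that claim (forcing $j-1=k$ via the partition, rather than showing the sibling has in-degree $0$ in $H(u,n)$) is only a minor rephrasing of the same degree-counting argument.
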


\begin{figure}
\begin{center}
\begin{tikzpicture}[scale=1.1]
\draw [directed,thick](0.6,0.8)--++(1,0);
\draw [directed,thick](1.6,0.8)--++(1,0);
\draw [directed,thick](2.6,0.8)--++(1,0);
\draw [directed,thick](3.6,0.8)--++(1,0);
\draw [directed,thick](4.6,0.8)--++(0.6,-0.8);

\draw [directed,thick](0.6,-0.8)--++(1,0);
\draw [directed,thick](1.6,-0.8)--++(1,0);
\draw [directed,thick](2.6,-0.8)--++(1,0);
\draw [directed,thick](3.6,-0.8)--++(1,0);
\draw [directed,thick](4.6,-0.8)--++(0.6,0.8);

\draw [directed,thick](5.2,0)--++(1,0);
\draw [directed,thick](6.2,0)--++(1,0);
\draw [directed,thick](7.2,0)--++(1,0);

\draw [directed,thick,dashed] (4.6,0.8) .. controls (2.6,1.5) .. (0.6,0.8);
\draw [directed,thick,dashed](4.6,-0.8)--++(-4,1.6);

\node[right] at (0.3,0.5) {$v_x$};
\node[right] at (-1,0.25) {$v_x=v_1\cdots v_{n-1}x$, $x\in A\setminus\{v_n\}$};
\node[right] at (-0.2,-1.2) {$S(u,1,n)$};
\node[right] at (3.9,-1.2) {$S(u,k,n)$};
\node[right] at (5,-0.5) {$S(u,k+1,n)$};
\node[right] at (5.1,0.25) {$v$};
\node[right] at (5.5,0.4) {$v=v_1v_2\cdots v_n$};
\node[right] at (4.5,1.2) {$H(u,n)$ (solid edges)};
\node[right] at (8.4,0) {$\ldots$};
\end{tikzpicture}
\caption{Illustration of the proof of Lemma~\ref{lem:constraint}.}
\label{fig:constraint}
\end{center}
\end{figure}
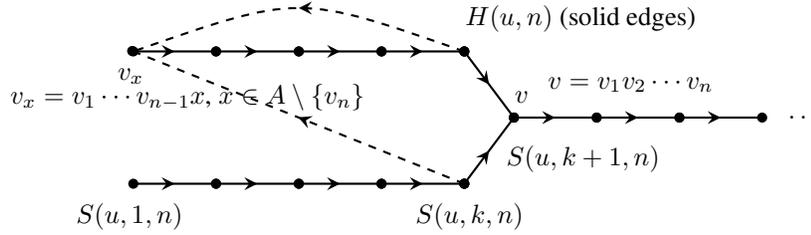

\begin{proof}
By Observation~\ref{obs:Hu-degrees}, each vertex in the set $S(u,k+1,n)$ has in-degree $\alpha$ in $H(u,n)$, and each vertex in $S(u,k,n)$ has out-degree 1.
By Observation~\ref{obs:common-neighbors}, for each $v=v_1v_2\cdots v_n\in S(u,k+1,n)$ there are $\alpha-1$ other vertices (different from the ones in $S(u,k+1,n)$) in $G_A^n$ with the same set $\Gamma^-(v)$ of $\alpha$ many in-neighbors, namely $v_x:=v_1\cdots v_{n-1}x$, where $x\in A\setminus \{v_n\}$ (see Figure~\ref{fig:constraint}).
As the in-degree of every vertex of $G_A^n$ is exactly $\alpha$, and in $H(u,n)$ all vertices except the ones in $S(u,1,n)$ already have in-degree at least 1, it follows that each of the vertices $v_x$ must be equal to one of the vertices in $S(u,1,n)$.
It follows that if $u_i\neq \Diamond$ then $u_{k+i}\neq \Diamond$ and $u_i=v_i=u_{k+i}$ for all $i=1,2,\ldots,n-1$.
Moreover, if $u_n\neq\Diamond$ and $\alpha\geq 3$, then there are at least two vertices $v_x$, $x\in A\setminus\{v_n\}$, ending with different symbols $x$, each of which must be equal to one of the vertices in $S(u,1,n)$, which is impossible because all words in this set end with the same symbol $u_n$.
It follows that if $u_n\neq\Diamond$ then we must have $\alpha=2$ and $u_n=x\neq v_n=u_{k+n}$, so $u_{k+n}=\ol{u_n}$.
\end{proof}

\section{Linear \upwords{} with a single \texorpdfstring{$\Diamond$}{diamond}}
\label{sec:single-diamond}

\subsection{Non-existence results}

Our first result completely excludes the existence of linear \upwords{} with a single $\Diamond$ for non-binary alphabets.

\begin{theorem}
\label{thm:alpha3}
For $A=\{0,1,\ldots,\alpha-1\}$, $\alpha\geq 3$, and any $n\geq 2$, there is no linear \upword{} for $A^n$ with a single $\Diamond$.
\end{theorem}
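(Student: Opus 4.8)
The plan is to suppose, for contradiction, that $u = u_1 u_2 \cdots u_N$ with $N = \alpha^n + n - 1$ is a linear \upword{} for $A^n$ with a single $\Diamond$, say $u_k = \Diamond$, and then derive a contradiction from Lemma~\ref{lem:constraint}. Since $\Diamond^n$ is the only \upword{} in which every position is a $\Diamond$ and here there is exactly one $\Diamond$, and since $n \geq 2$, the $\Diamond$ cannot be the whole word; so we may use the normalization conventions from Section~\ref{sec:prelim} (letters of $A$ appearing in increasing order, $\Diamond$ at position $k$ at least as close to the start as to the end, i.e.\ $2k \leq N+1$, possibly after reversal). The key point is that because there is only one $\Diamond$, we have $k + n \leq N$ (as $k$ is in the left half and $n < N$), so Lemma~\ref{lem:constraint} applies with this $k$: for every $i \in \{1,\ldots,n-1\}$ with $u_i \neq \Diamond$ we get $u_{k+i} = u_i$, and the ``moreover'' clause says that if $u_n \neq \Diamond$ then $\alpha = 2$. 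Since $\alpha \geq 3$, we conclude $u_n = \Diamond$, forcing $k = n$ (the $\Diamond$ is at position $n$).

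So the problem reduces to the case $k = n$, i.e.\ $u = u_1 \cdots u_{n-1} \Diamond u_{n+1} \cdots u_N$ with $u_1,\ldots,u_{n-1}, u_{n+1},\ldots,u_N \in A$. Now I would count occurrences of factors to get a parity/cardinality contradiction. The subword of $u$ of length $n$ starting at position $1$ contains the $\Diamond$ in its last slot, so $S(u,1,n) = \{u_1\cdots u_{n-1} x : x \in A\}$ has size $\alpha$; every other length-$n$ window is $\Diamond$-free and yields a single word. Counting: there are $N - n + 1 = \alpha^n$ windows, one of which (the first) covers $\alpha$ vertices and the remaining $\alpha^n - 1$ cover one vertex each, for a total of $\alpha + (\alpha^n - 1) = \alpha^n + \alpha - 1$ vertex-occurrences. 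For $u$ to be universal this must equal exactly $\alpha^n = |A^n|$ with every vertex hit once; but $\alpha^n + \alpha - 1 > \alpha^n$ for $\alpha \geq 2$, so some word of $A^n$ is covered at least twice — contradiction. (Equivalently, invoke Lemma~\ref{lem:constraint} directly: with $u_n = \Diamond$ and $k = n$ the hypothesis $u_{k+n} \neq \Diamond$ gives $u_{2n} \in A$ since there is only one $\Diamond$, and then the clause ``if $u_i \neq \Diamond$ then $u_{k+i} = u_i$'' already over-determines the word; but the clean counting argument is self-contained and avoids re-reading the lemma's edge cases.)

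The main obstacle, and the only place that needs care, is justifying that $k+n \leq N$ so that Lemma~\ref{lem:constraint} is actually applicable — i.e.\ handling the normalization that puts the $\Diamond$ in the left half of $u$. For $n \geq 2$ and $N = \alpha^n + n - 1 \geq \alpha^2 + 1 \geq 5$, the constraint $2k \leq N+1$ gives $k \leq (N+1)/2$, and then $k + n \leq (N+1)/2 + n$; one checks $(N+1)/2 + n \leq N$ is equivalent to $2n + 1 \leq N$, i.e.\ $2n + 1 \leq \alpha^n + n - 1$, i.e.\ $n + 2 \leq \alpha^n$, which holds for all $\alpha \geq 3, n \geq 2$ (indeed for $\alpha \geq 2$, $n \geq 2$ except the tiny cases which are easily checked directly). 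Everything else — the application of Lemma~\ref{lem:constraint} to force $u_n = \Diamond$ hence $k = n$, and the final counting contradiction — is routine. I expect the write-up to be short: one paragraph to set up and reduce to $k = n$ via Lemma~\ref{lem:constraint}, one short paragraph for the counting contradiction.
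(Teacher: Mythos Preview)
Your overall strategy---apply Lemma~\ref{lem:constraint} to force $u_n=\Diamond$ and hence $k=n$, then derive a contradiction---matches the paper's. But two steps in your execution are genuinely flawed.

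First, the assertion $N=\alpha^n+n-1$ is incorrect: that is the length of a universal word with \emph{no} $\Diamond$. A linear \upword{} with a $\Diamond$ is strictly shorter, since each window containing the $\Diamond$ contributes $\alpha$ factors rather than one. Consequently your verification that $k+n\le N$ (needed to invoke Lemma~\ref{lem:constraint}) rests on a false premise. The paper establishes this step by a direct factor count instead: if the $\Diamond$ had fewer than $n$ letters from $A$ on each side, then $u$ would yield at most $\alpha n<\alpha^n$ factors, too few; hence w.l.o.g.\ it is followed by at least $n$ letters, i.e.\ $k+n\le N$.

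Second, and more seriously, your counting argument for the final contradiction cannot work. With $k=n$, the length-$n$ windows starting at positions $1,2,\ldots,n$ \emph{all} contain the $\Diamond$, not just the first one. Once both the value of $N$ and the number of $\Diamond$-containing windows are corrected, the total factor count equals $\alpha^n$ exactly---as it must for any \upword{} by definition---so counting alone produces no contradiction. The paper instead reads off from Lemma~\ref{lem:constraint} (applied with $k=n$, noting $u_1,\ldots,u_{n-1}\in A$) that $u_{n+i}=u_i$ for all $i=1,\ldots,n-1$, whence the word $u_{n+1}\cdots u_{2n}\in A^n$ appears as a factor at both position $1$ and position $n+1$. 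This repeated factor is the contradiction. Your parenthetical alternative gestures at exactly this argument, and it is the correct way to finish; the ``clean counting argument'' you prefer is not actually a proof.
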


\begin{proof}
Suppose that such \an{} \upword{} $u=u_1u_2\cdots u_{k-1}\Diamond u_{k+1}\cdots u_N$ exists.
We claim that the $\Diamond$ in $u$ is preceded or followed by at least $n$ symbols from $A$.
If not, then $u$ would have at most $\alpha n$ different factors, which is strictly less than $\alpha^n$ for $\alpha\geq 3$ and $n\geq 2$.
So we assume w.l.o.g.\ that the $\Diamond$ in $u$ is followed by at least $n$ symbols from $A$, i.e., $k+n\leq N$.
By Lemma~\ref{lem:constraint} we have $u_i=\Diamond$ or $u_{k+i}=u_i$ for all $i=1,2,\ldots,n-1$ and $u_n=\Diamond$, which implies $k=n$ and therefore $u_{n+i}=u_i$ for all $i=1,\ldots,n-1$.
But this means that the word $v:=u_{n+1}\cdots u_{2n}\in A^n$ appears twice as a factor in $u$ starting at positions 1 and $n+1$ (in other words, the vertex $v\in S(u,n+1,n)$ is identical to a vertex from $S(u,1,n)$ in $H(u,n)$), a contradiction.
\end{proof}

Our next result excludes several cases with a single $\Diamond$ for a binary alphabet.

\begin{theorem}
\label{thm:diamond-at-pos-n}
For $A=\{0,1\}$ and any $n\geq 2$, there is no linear \upword{} for $A^n$ with a single $\Diamond$ at position $n$ from the beginning or end.
\end{theorem}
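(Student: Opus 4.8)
The plan is to bring the word into the form handled by Lemma~\ref{lem:constraint} and then exploit the periodicity it forces. First I would use the reversal symmetry to assume the $\Diamond$ sits at position $n$ counted from the left, so that $u=u_1\cdots u_{n-1}\Diamond u_{n+1}\cdots u_N$ with $u_i\in A$ for every $i\neq n$. A short counting step controls the length: each of the $N-n+1$ length-$n$ windows of $u$ produces at most two factors (exactly two when it covers the single $\Diamond$, one otherwise), and all $2^n$ words over $A$ must occur, so $2^n\le 2(N-n+1)$, i.e.\ $N\ge 2^{n-1}+n-1$. For $n\ge 3$ this gives $N\ge 2n$, so the window of $u$ starting at position $n+1$ exists and contains no $\Diamond$; for $n=2$ it gives $N\ge 3$.

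For $n\ge 3$ I would then apply Lemma~\ref{lem:constraint} with $k=n$: the hypotheses hold since $u_n=\Diamond$, $u_{2n}\in A$, and $k+n=2n\le N$. As $u_i\in A$ for $i=1,\dots,n-1$, the lemma yields $u_{n+i}=u_i$ for all those $i$, i.e.\ the block just after the $\Diamond$ repeats the prefix $u_1\cdots u_{n-1}$. Hence the window starting at position $n+1$ reads $u_1\cdots u_{n-1}u_{2n}$, so $S(u,n+1,n)=\{u_1\cdots u_{n-1}u_{2n}\}$, while the initial window $u_1\cdots u_{n-1}\Diamond$ gives $S(u,1,n)=\{u_1\cdots u_{n-1}0,\,u_1\cdots u_{n-1}1\}$. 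Since $u_{2n}\in\{0,1\}$, these two sets intersect, contradicting that the sets $S(u,i,n)$ partition $A^n$. The case $n=2$ is immediate: then $N\ge 3$, and the word $u_1u_3$ appears as a factor both at position $1$ (the $\Diamond$ matching $u_3$) and at position $2$ (the $\Diamond$ matching $u_1$), again a contradiction.

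I do not expect a genuine obstacle here. The conceptual heart is the one-line observation that a $\Diamond$ at position $n$ makes the first window and the window immediately following the $\Diamond$ generate overlapping factors; the only thing requiring care is the boundary bookkeeping — verifying that $u$ is long enough for that second window to exist (equivalently, for Lemma~\ref{lem:constraint} to be applicable at $k=n$), which is why the length bound $N\ge 2n$ is needed for $n\ge 3$ and why $n=2$ is dispatched by hand.
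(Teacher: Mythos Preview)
Your proof is correct and follows essentially the same approach as the paper: reduce to the $\Diamond$ at position $n$ from the left, verify there are at least $n$ symbols after the $\Diamond$, apply Lemma~\ref{lem:constraint} with $k=n$ to force $u_{n+i}=u_i$ for $i=1,\dots,n-1$, and conclude that the factor at position $n+1$ already lies in $S(u,1,n)$. The only cosmetic differences are that the paper simply asserts the exact length $N=2^n-1$ rather than deriving the weaker bound $N\ge 2^{n-1}+n-1$ by counting, and that your $n=2$ argument (observing $u_1u_3$ occurs at both positions $1$ and $2$) is a bit slicker than the paper's short case check.
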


\begin{proof}
We first consider the case $n=2$.
Suppose that there is \an{} \upword{} $u=u_1\Diamond u_3$ for $A^n$.
Assuming w.l.o.g.\ that $u_1=0$, we must have $u_3=1$, otherwise the word $00$ would appear twice as a factor.
But then the word $10$ does not appear as a factor in $u=0\Diamond 1$, while 01 appears twice, a contradiction.

For the rest of the proof we assume that $n\geq 3$.
Suppose there was \an{} \upword{} $u=u_1u_2\cdots u_{n-1}\Diamond u_{n+1}\cdots u_N$ with $N=2^n-1$.
Note that $N-n\geq n$, or equivalently $2^n\geq 2n+1$, holds by our assumption $n\geq 3$, so the $\Diamond$ in $u$ is followed by at least $n$ more symbols from $A$.
Applying Lemma~\ref{lem:constraint} yields that $u_{n+i}=u_i$ for all $i=1,\ldots,n-1$, which means that the word $v:=u_{n+1}\cdots u_{2n}\in A^n$ appears twice as a factor in $u$ starting at positions 1 and $n+1$, a contradiction.
\end{proof}

In contrast to Theorem~\ref{thm:alpha3}, for a binary alphabet we can only exclude the following three more (small) cases in addition to the cases excluded by Theorem~\ref{thm:diamond-at-pos-n} (all the exceptions are marked in Table~\ref{tab:upwords1}).

\begin{theorem}
\label{thm:diamond-at-pos-57}
For $A=\{0,1\}$, there is no linear \upword{} for $A^n$ with a single $\Diamond$ at position $k$ from the beginning or end in the following three cases: $n=3$ and $k=4$, and $n=4$ and $k\in\{5,7\}$.
\end{theorem}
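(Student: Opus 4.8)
The plan is to apply Lemma~\ref{lem:constraint} twice — once to $u$ and once to its reversal $u^R$ (again a linear \upword{}) — and then, in the two cases with $n=4$, to finish with a bounded case check.

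Setup. In each of the three cases the single $\Diamond$ lies so that exactly $n$ length-$n$ windows pass through it; since such a window contributes two factors and every other window contributes one, the total $2n+(N-2n+1)=N+1$ must equal $2^n$, so $N:=|u|=2^n-1$. By the normalisations of Section~\ref{sec:prelim} we may write $u=u_1\cdots u_N$ with $u_1=0$ and $u_k=\Diamond$, where $k=4$ if $n=3$ and $k\in\{5,7\}$ if $n=4$ (the ``position $k$ from the end'' alternative follows by reversal). In all three cases $n+1\le k$ and $k+n\le N$, and $u_1,\dots,u_n$ are all different from $\Diamond$, so Lemma~\ref{lem:constraint} applied to $u$ gives $u_{k+i}=u_i$ for $i=1,\dots,n-1$ and $u_{k+n}=\overline{u_n}$. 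Reversing $u$ moves the $\Diamond$ to position $k':=N+1-k$, and one checks $k'+n\le N$; applying the lemma to $u^R$ therefore yields the mirror-image relations $u_{k-i}=u_{N+1-i}$ for $i=1,\dots,n-1$ and $u_{k-n}=\overline{u_{N+1-n}}$.

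The case $n=3$, $k=4$, $N=7$ is settled at once: the forward application gives $u_7=\overline{u_3}$, and the $i=1$ case of the backward application gives $u_3=u_7$, a contradiction, so no such \upword{} exists.

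For $n=4$, $N=15$, the two applications pin down every letter of $u$ in terms of $u_1=0$ and five free bits: for $k=5$ one gets $u=0\,u_2u_3u_4\,\Diamond\,0\,u_2u_3\,\overline{u_4}\,u_{10}u_{11}\,1\,u_2u_3u_4$, and for $k=7$ one gets $u=0\,u_2u_3u_4u_5u_6\,\Diamond\,0\,u_2u_3\,\overline{u_4}\,\overline{u_3}\,u_4u_5u_6$. It remains to refute all $2^5=32$ settings of the free bits, and for each it suffices to exhibit two of the twelve length-$4$ windows of $u$ that spell the same word of $\{0,1\}^4$ (equivalently, a word of $\{0,1\}^4$ that never occurs). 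I would keep this short by first inspecting the windows close to the $\Diamond$: a repeated word already occurs among them for all but a small family of settings, and each remaining setting is then killed by one or two of the other windows. The only real obstacle is organisational — laying out the $n=4$ case distinction so that it stays small; apart from that the argument is routine, and the finite check can equally well be confirmed by computer (cf.\ Table~\ref{tab:upwords1}).
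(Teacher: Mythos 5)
Your setup and your treatment of $n=3$, $k=4$ are correct and coincide with the paper's proof: the length count $N=2^n-1$, the two applications of Lemma~\ref{lem:constraint} (to $u$ and to $u^R$), and the immediate contradiction $u_7=\ol{u_3}$ versus $u_3=u_7$ are all exactly what the paper does. Likewise, the constrained forms you derive for $n=4$ — $u=u_1u_2u_3u_4\Diamond u_1u_2u_3\ol{u_4}u_{10}u_{11}\ol{u_1}u_2u_3u_4$ for $k=5$ and $u=u_1\cdots u_6\Diamond u_1u_2u_3\ol{u_4}\ol{u_3}u_4u_5u_6$ for $k=7$ — agree with the paper, and your reduction to a finite verification is sound: since the word has exactly $2^4$ factor occurrences counted with multiplicity, non-existence is equivalent to a repeated factor in every one of the $2^5$ settings of the free bits.

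The gap is that you never actually perform that finite check. "I would keep this short by first inspecting the windows close to the $\Diamond$ \dots\ each remaining setting is then killed by one or two of the other windows" is a plan, not an argument; as written, the $n=4$ cases are not closed. The check would succeed, but you should either exhibit the case analysis or replace it with the paper's finishing move, which avoids the enumeration entirely: normalise $u_1=0$ (for $k=5$) resp.\ $u_3=0$ (for $k=7$), observe that the factor $0000$ must occur somewhere, note that the known letter $u_{12}=1$ restricts its starting position to $1,\ldots,8$, rule out most positions because the constrained form forces $0000$ to appear twice, and rule out the remaining few because the forced neighbouring $1$s create a different repeated factor ($0101$, $1010$, etc.). That argument is short, human-checkable, and needs no split into $32$ subcases.
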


\begin{proof}
Suppose that there is \an{} \upword{} $u=u_1u_2u_3\Diamond u_5u_6u_7$ for the case $n=3$.
Applying Lemma~\ref{lem:constraint} twice to $u$ and its reverse we obtain that $u_5u_6u_7=u_1u_2\ol{u_3}$ and $u_1u_2u_3=\ol{u_5}u_6u_7$, a contradiction.

To prove the second case suppose that there is \an{} \upword{} of the form $u=u_1u_2u_3u_4\Diamond u_6\cdots u_{15}$ for $n=4$.
Applying Lemma~\ref{lem:constraint} twice to $u$ and its reverse we obtain that $u$ has the form $u=u_1u_2u_3u_4\Diamond u_1u_2u_3\ol{u_4}u_{10}u_{11}\ol{u_1}u_2u_3u_4$.
We assume w.l.o.g.\ that $u_1=0$.
The word $z:=0000$ must appear somewhere as a factor in $u$, and since $u_{12}=\ol{u_1}=1$, the only possible starting positions for $z$ in $u$ are $1,2,\ldots,8$.
However, the starting positions $1,2,5,6,7$ can be excluded immediately, as they would cause $z$ to appear twice as a factor in $u$.
On the other hand, if $z$ starts at positions 3, 4 or 8, then the neighboring letters must both be 1, causing 0101, 1010 or 1101, respectively, to appear twice as a factor in $u$, a contradiction.


The proof of the third case proceeds very similarly to the second case, and allows us to conclude that such \an{} \upword{} $u$ must have the form $u=u_1u_2u_3u_4u_5u_6\Diamond u_1u_2u_3\ol{u_4}\ol{u_3}u_4u_5u_6$.
We assume w.l.o.g.\ that $u_3=0$.
The word $z:=0000$ must appear somewhere as a factor in $u$, and since $u_{12}=\ol{u_3}=1$ the only possible starting positions for $z$ in $u$ are $1,2,\ldots,8$.
The starting positions $1,3,4,6,8$ can be excluded immediately, as they would cause $z$ to appear twice as a factor in $u$.
On the other hand, if $z$ starts at positions 2, 5 or 7, then the neighboring letters must both be 1, causing 0011, 0101 or 0000, respectively, to appear twice as a factor in $u$, a contradiction.

\end{proof}

\subsection{Existence results}

We conjecture that for a binary alphabet and a single $\Diamond$, the non-existence cases discussed in the previous section are the only ones.

\begin{conjecture}
\label{conj:single-diamond}
For $A=\{0,1\}$ and any $n\geq 1$, there is a linear \upword{} for $A^n$ containing a single $\Diamond$ at position $k$ in every case not covered by Theorem~\ref{thm:diamond-at-pos-n} or Theorem~\ref{thm:diamond-at-pos-57}.
\end{conjecture}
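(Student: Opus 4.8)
The plan is to reduce the conjecture to a single existence statement about the De~Bruijn graph $G_A^{n-1}$, and then to establish that statement. The cases $k\le n-1$ are already settled by Theorems~\ref{thm:diamond-at-pos-1} and~\ref{thm:diamond-at-pos-k}, and $k=n$ is excluded by Theorem~\ref{thm:diamond-at-pos-n}, so after the normalization ``$\Diamond$ at least as close to the start as to the end'' only the range $n+1\le k\le 2^{n-1}$ remains; here the $\Diamond$ necessarily lies in $n$ consecutive length-$n$ windows, so $N=2^n-1$. For $n=3$ this range is the single value $k=4$, excluded by Theorem~\ref{thm:diamond-at-pos-57}, and for $n=4$ it is $\{6,8\}$, settled by the explicit examples in Table~\ref{tab:upwords1}. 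Hence it suffices to treat $n\ge 5$.

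Step~1 (the forced skeleton). Apply Lemma~\ref{lem:constraint} to $u$ and to its reverse; one checks that both hypotheses hold throughout the range. This forces all letters of $u$ in the block of positions $[k-n,\,k+n]$: if the chosen prefix is $u_1\cdots u_n$ and the chosen suffix is $u_{N-n+1}\cdots u_N$, then $u_{k+i}=u_i$ and $u_{k-i}=u_{N+1-i}$ for $1\le i\le n-1$, while $u_{k+n}=\ol{u_n}$ and $u_{k-n}=\ol{u_{N+1-n}}$. Thus once the $2n$ ``seed'' letters at the two ends are chosen, the prefix, the $(2n{+}1)$-letter $\Diamond$-block, and the suffix are determined, and only the two free stretches $u_{n+1}\cdots u_{k-n-1}$ and $u_{k+n+1}\cdots u_{N-n}$ remain to be filled in. (When $k\le 2n$ the block overlaps the prefix, which only shortens the first free stretch and imposes a consistency condition on the seeds; I would handle this finite boundary sub-range for each $n$ either by the same reasoning or by pushing the construction behind Theorem~\ref{thm:diamond-at-pos-k} one step further.)

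Step~2 (translation to $G_A^{n-1}$). By Observations~\ref{obs:common-neighbors} and~\ref{obs:Hu-degrees}, the spanning subgraph $H(u,n)$ is a path that, along the $\Diamond$-block, splits into two internally disjoint parallel strands of length $n$ and then merges again. Under the line-graph isomorphism $G_A^n=L(G_A^{n-1})$, the prescribed prefix, $\Diamond$-block and suffix become a sparse, rigidly structured subconfiguration of $G_A^{n-1}$ (a few short trails together with the two strands that realize the branching of $H(u,n)$), and filling in the two free stretches amounts to decomposing all remaining edges of $G_A^{n-1}$ into exactly two edge-disjoint directed trails $P_1,P_2$ with four prescribed endpoints. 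Hence $u$ exists precisely when, for some admissible seed choice, the graph $G^\circ$ obtained from $G_A^{n-1}$ by removing the prescribed edges is connected and has exactly the two prescribed vertices of positive out-excess and the two of positive in-excess (all other vertices balanced); the directed criterion for decomposing a connected digraph into a prescribed set of trails (a routine extension of the Eulerian-path argument in the proof of Theorem~\ref{thm:universal}) then produces $P_1$ and $P_2$, and reading off the corresponding letters of $u$ finishes the construction.

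Step~3, and the main obstacle. The degree bookkeeping at the four endpoints is routine and can be arranged through the seed choice; the genuine difficulty is proving that $G^\circ$ is connected \emph{uniformly} in $n$ and $k$. Here one would exploit that $G_A^{n-1}$ is vertex-transitive, $2$-in/$2$-out-regular, of diameter $n-1$, and that removing any arc except the non-loop arcs incident to the constant vertices $0^{n-1}$ and $1^{n-1}$ leaves it strongly connected — so deleting a set of only $\Theta(n)$ edges, and a structured one at that, should not disconnect it once $n$ is large, after re-choosing the seeds if necessary. The subtle point is that this deleted set depends on $k$ through the lengths of the two free stretches, so for \emph{every} $k$ in $\{n+1,\dots,2^{n-1}\}$ one must be able to pick the $2n$ seeds (and, when $k\le 2n$, reconcile the overlap) so that the deletion avoids every small edge cut; establishing this simultaneously for all $k$, while remaining consistent with the fact that it must genuinely fail for $(n,k)\in\{(4,5),(4,7)\}$ — where Theorem~\ref{thm:diamond-at-pos-57} shows the forced skeleton admits no completion at all — is where the argument becomes delicate, and is, I believe, the reason the statement is still only conjectural. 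A cleaner but only partial alternative would be a recursive lifting of an \upword{} for $A^{n-1}$ with $\Diamond$ at position $k'$ to one for $A^n$ with $\Diamond$ near position $2k'$; combined with the already-solved small positions this settles a sparse subset of the range, but interpolating to cover all of $\{n+1,\dots,2^{n-1}\}$ would still require the uniform connectivity analysis above.
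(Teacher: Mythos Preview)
This statement is labeled a \emph{conjecture} in the paper and is not proved there. The authors establish only the cases $k=1$ (Theorem~\ref{thm:diamond-at-pos-1}) and $2\le k\le n-1$ (Theorem~\ref{thm:diamond-at-pos-k}); for the interior positions $k\ge n+1$ they offer nothing beyond computer verification for $2\le n\le 13$. So there is no proof in the paper for your proposal to be compared against.

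Your outline follows exactly the paradigm the paper uses for the cases it \emph{does} settle --- force part of the word via Lemma~\ref{lem:constraint}, pass to $G_A^{n-1}$, and reduce to an Eulerian-type existence question --- and you correctly locate the central difficulty in the uniform-in-$k$ analysis of the residual graph. You also honestly flag that you have not carried this step out, so what you have is a plan rather than a proof, and you say as much.

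One further gap is worth naming. Even granting connectivity and the correct degree excesses in $G^\circ$, the Eulerian-type criterion yields \emph{some} decomposition into two trails with the prescribed endpoint multiset, but it controls neither the source--sink pairing nor, crucially, the \emph{lengths} of $P_1$ and $P_2$. To place the $\Diamond$ at the specific target position $k$, the first free trail must have a prescribed number of edges and the second the complementary count. Since the set of edges you remove from $G_A^{n-1}$ depends only on the chosen seeds and not on $k$, a single seed choice with connected $G^\circ$ does not automatically realize every length split; showing that every $k$ in $\{n+1,\ldots,2^{n-1}\}$ is hit by some seed choice --- or that a connected $G^\circ$ in a De~Bruijn graph always admits every split --- is a second substantial obstacle, and the phrase ``a routine extension of the Eulerian-path argument'' does not cover it.
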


Recall the numerical evidence for the conjecture discussed in the introduction.
In the remainder of this section we prove some cases of this general conjecture.

\begin{theorem}
\label{thm:diamond-at-pos-1}
For $A=\{0,1\}$ and any $n\geq 2$, there is a linear \upword{} for $A^n$ with a single $\Diamond$ at the first position that begins with $\Diamond 0^{n-1}1$.
\end{theorem}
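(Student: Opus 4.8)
The plan is to construct the linear \upword{} $u$ of length $N=2^n-1$ explicitly by exhibiting the right subgraph $H(u,n)$ of $G_A^n$, or equivalently, by finding the appropriate Eulerian-type path in $G_A^{n-1}$. Since the $\Diamond$ sits at position $1$, the set $S(u,1,n)$ has size $2$, namely $\{00^{n-2}1, 10^{n-2}1\}$ (the two substitutions into $\Diamond 0^{n-1}1$), and by Observation~\ref{obs:Hu-degrees} these two vertices have in-degree $0$ while every other vertex of $G_A^n$ has in-degree exactly $1$ in $H(u,n)$; all vertices except the one in $S(u,N-n+1,n)$ have out-degree $1$. So $H(u,n)$ must be a Hamiltonian path of $G_A^n$ \emph{with one extra initial edge} hanging off: more precisely, deleting one of the two source vertices of $S(u,1,n)$ from $H(u,n)$ leaves a Hamiltonian path of $G_A^n$ starting at the other source vertex. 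In the line-graph picture: we prescribe the first $n+1$ letters of $u$ to be $\Diamond 0^{n-1}1$, which in $G_A^{n-1}$ corresponds to fixing the first two edges as the loop $0^{n-1}\to 0^{n-1}$ followed by $0^{n-1}\to 0^{n-2}1$, and then we need an Eulerian path in $G_A^{n-1}$ that uses every remaining edge exactly once, starting from vertex $0^{n-2}1$.

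The key step is therefore to produce such an Eulerian path in $G_A^{n-1}\setminus\{e_0\}$, where $e_0$ is the loop at $0^{n-1}$. After deleting the loop $e_0$, the vertex $0^{n-1}$ has out-degree $1$ and in-degree $2$ (so it is a valid Eulerian-path endpoint), the vertex $0^{n-2}1$ has out-degree $2$ and in-degree $1$ (a valid start), and every other vertex is balanced; moreover the graph stays connected. By Euler's theorem (cited in Section~\ref{sec:prelim}) an Eulerian path from $0^{n-2}1$ to $0^{n-1}$ exists. I would then argue that the corresponding word, which necessarily has the form $u=\Diamond 0^{n-1}1\,w\,0^{n-1}$ for some binary word $w$ — the trailing $0^{n-1}$ coming from the path ending at vertex $0^{n-1}$ — is a genuine linear \upword{}: every length-$n$ binary word $v$ arises from exactly one edge of $G_A^{n-1}$, the edges $e_0$ and the first two prescribed edges account respectively for $0^n$, $0^n$ again via the $\Diamond\mapsto 0$ substitution, and $0^{n-1}1$, and the Eulerian path covers each of the remaining edges once. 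One must check carefully that the double-counting of $0^n$ (once from substituting $\Diamond\mapsto 0$ in $\Diamond 0^{n-1}$, once potentially from the suffix $0^{n-1}$) is consistent — in fact substituting $\Diamond\mapsto 0$ in the prefix yields $0^n$, and this is the \emph{only} occurrence of $0^n$ as a factor since the suffix $0^{n-1}$ has length only $n-1$ and is preceded by a $1$ (the last letter of $w$, or rather the path never revisits the loop $e_0$), so $0^n$ appears exactly once. Similarly $10^{n-1}$, $0^{n-1}1$, etc., each appear exactly once by the Eulerian property.

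The main obstacle I anticipate is not the existence of the Eulerian path — that is immediate from Euler's theorem — but rather the bookkeeping that converts ``Eulerian path in $G_A^{n-1}\setminus\{e_0\}$ from $0^{n-2}1$ to $0^{n-1}$'' into ``$u=\Diamond 0^{n-1}1\cdots$ is a linear \upword{}'', making sure (i) the substitution of the single $\Diamond$ contributes exactly the two factors $00^{n-2}1$ and $10^{n-2}1$ (equivalently $0^{n-1}1$ appears once and $10^{n-2}1$ appears once via the $\Diamond$, so the edge $0^{n-1}1\to 0^{n-2}11$... wait, more carefully: the two length-$n$ subwords from the $\Diamond$ position are $00^{n-1}=0^n$ and $10^{n-1}$), and (ii) no length-$n$ word is produced both by the $\Diamond$ substitution and by some $\Diamond$-free position. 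Concretely: the $\Diamond$ at position $1$ yields factors $0^n$ and $10^{n-1}$; the $\Diamond$-free length-$n$ subwords of $u$ starting at positions $2,3,\dots,N-n+1$ correspond bijectively to the edges of $G_A^{n-1}$ traversed after the first prescribed loop, i.e., to all edges of $G_A^{n-1}$ except $e_0$; so we need $0^n$ (which is $e_0$) to be exactly the word we ``lose'' from the $\Diamond$-free part and ``regain'' from the $\Diamond$ part, and we need $10^{n-1}$ (the edge $10^{n-2}\to 0^{n-1}$) to appear \emph{only} from the $\Diamond$ and not among positions $2,\dots,N-n+1$ — which forces the Eulerian path to \emph{end} at $0^{n-1}$ via a final edge that is \emph{not} $10^{n-2}\to 0^{n-1}$, hence via $0^{n-1}\to 0^{n-1}$... but that's $e_0$, deleted. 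So in fact the Eulerian path must use the edge $10^{n-2}\to0^{n-1}$ somewhere, contradiction — unless the path starts differently. This tension is exactly what pins down the construction, and resolving it (probably by choosing the path to \emph{end} with the edge into $0^{n-1}$ coming from $10^{n-2}$, and accepting that $10^{n-1}$ then appears once there and \emph{not} from the $\Diamond$, meaning we should substitute the $\Diamond$ as $0$ only) is where the real care lies; I expect the clean resolution is that the leading $\Diamond$ together with the forced form $u=\Diamond 0^{n-1}1\cdots 0^{n-1}$ makes $0^n$ appear once (from $\Diamond\mapsto0$) and $10^{n-1}$ appear once (as the factor ending the word, at position $N-n+1$, since $u_{N-n+1}\cdots u_N = 10^{n-1}$ given the path ends at $0^{n-1}$ arriving from $10^{n-2}$), and everything else once from the interior, so the count is exactly $2^n$. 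I would verify this directly for small $n$ ($n=2,3,4,5$, matching Table~\ref{tab:upwords1}) and then present the general argument along these lines.
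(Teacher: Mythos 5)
Your overall strategy --- prescribe the prefix $\Diamond 0^{n-1}1$, translate it into edges of $G_A^{n-1}$, delete those edges, and find an Eulerian path in what remains --- is exactly the paper's approach, but the execution has a genuine error in the accounting of which edges the prefix consumes. The length-$n$ window of $u$ starting at position $1$ is $\Diamond 0^{n-1}$, and by the definition of a factor in a partial word it contributes \emph{both} $0^n$ and $10^{n-1}$; together with the window $0^{n-1}1$ at position $2$, the prefix therefore accounts for \emph{three} edges of $G_A^{n-1}$: the loop $(0^{n-1},0^{n-1})$, the edge $(10^{n-2},0^{n-1})$, and the edge $(0^{n-1},0^{n-2}1)$. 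You delete only the loop $e_0$. This already makes your degree bookkeeping wrong (removing a loop takes away one in-edge \emph{and} one out-edge, so $0^{n-1}$ becomes balanced with in- and out-degree $1$, while $0^{n-2}1$ keeps in-degree $2$ and out-degree $2$; neither is an Eulerian endpoint of $G_A^{n-1}\setminus\{e_0\}$), and, more importantly, it leaves the edge $(10^{n-2},0^{n-1})$ in the graph, so any Eulerian path must traverse it and the factor $10^{n-1}$ then appears twice: once from the $\Diamond$ at position $1$ and once in the $\Diamond$-free part of the word.

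You do notice this tension, but neither of your proposed resolutions is admissible. You cannot ``substitute the $\Diamond$ as $0$ only'': every substitution of the $\Diamond$ yields a factor, so $10^{n-1}$ occurs at position $1$ no matter what. Nor can you let $10^{n-1}$ be contributed by a terminal segment $\ldots 10^{n-1}$ of $u$ instead of by the $\Diamond$ --- that just produces the second occurrence. The correct fix is the paper's: delete all three edges listed above, which isolates $0^{n-1}$ (it is then discarded), leaves $0^{n-2}1$ with out-degree $2$ and in-degree $1$, leaves $10^{n-2}$ with in-degree $2$ and out-degree $1$, and keeps every other vertex balanced. The Eulerian path of the resulting (connected) graph then runs from $0^{n-2}1$ to $10^{n-2}$, so the word \emph{ends} with $\ldots 10^{n-2}$ rather than $\ldots 10^{n-1}$, and $10^{n-1}$ is realized exactly once, by the $\Diamond$. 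With that correction the rest of your bookkeeping goes through and the argument coincides with the paper's proof.
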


Note that by Lemma~\ref{lem:constraint}, {\em every} linear \upword{} for $A^n$ with a single $\Diamond$ of the form $u=\Diamond u_2u_3\cdots u_N$ satisfies the conditions $u_2=u_3=\cdots=u_n=\ol{u_{n+1}}$, i.e., w.l.o.g.\ it begins with $\Diamond 0^{n-1}1$ (up to letter permutations).

\begin{proof}
Consider the word $v=v_1v_2\cdots v_{n+1}:=\Diamond 0^{n-1}1$ and the corresponding three edges $(0^{n-1},0^{n-1})$, $(10^{n-2},0^{n-1})$ and $(0^{n-1},0^{n-2}1)$ in the De Bruijn graph $G_A^{n-1}$.
Denote the graph obtained from $G_A^{n-1}$ by removing these three edges and the isolated vertex $0^{n-1}$ by $G'$.
Clearly, the edges of $G'$ form a connected graph, and every vertex in $G'$ has in- and out-degree exactly two, except the vertex $y:=0^{n-2}1$ which has one more out-edge than in-edges and the vertex $z:=10^{n-2}$ which has one more in-edge than out-edges.
Therefore, $G'$ has an Eulerian path starting at $y$ and ending at $z$, and this Eulerian path yields the desired \upword{} that begins with $v$.
\end{proof}

For any binary word $w\in A^k$, $A=\{0,1\}$, and any $n\geq 1$, we write $c(w,n)=c_1c_2\cdots c_n$ for the word given by $c_i=w_i$ for $i=1,2,\ldots,k$, $c_i=c_{i-k}$ for all $i=k+1,k+2,\ldots,n-1$ and $c_n=\ol{c_{n-k}}$.
Informally speaking, $c(w,n)$ is obtained by concatenating infinitely many copies of $w$, truncating the resulting word at length $n$ and complementing the last symbol.
For example, we have $c(011,7)=0110111$ and $c(011,8)=01101100$.
Using this terminology, the starting segment of the linear \upword{} from Theorem~\ref{thm:diamond-at-pos-1} can be written as $\Diamond c(0,n)$.
The next result is a considerable extension of the previous theorem.

\begin{theorem}
\label{thm:diamond-at-pos-k}
For $A=\{0,1\}$, any $n\geq 3$ and any $k\in\{2,3,\ldots,n-1\}$, there is a linear \upword{} for $A^n$ with a single $\Diamond$ at the $k$-th position that begins with $01^{k-2}\Diamond c(01^{k-1},n)$.
\end{theorem}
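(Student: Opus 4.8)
The plan is to follow the proof of Theorem~\ref{thm:diamond-at-pos-1}. I prescribe the initial segment $P:=01^{k-2}\Diamond c(01^{k-1},n)$ of the sought \upword{} (a word of length $k+n$ whose $\Diamond$ sits at position $k$), identify the set $F$ of edges of the De Bruijn graph $G_A^{n-1}$ that $P$ consumes, and then exhibit an Eulerian path of the remaining graph $G':=G_A^{n-1}\setminus F$, with its isolated vertices deleted, that begins at the state reached after reading $P$. Concatenating $P$ with the word spelled out along this path gives the required \upword{}. Since the single $\Diamond$ lies inside $P$ and everything after position $k+n$ is $\Diamond$-free, this is formally the same type of argument as for Theorem~\ref{thm:diamond-at-pos-1}, just with a larger and less transparent set $F$.

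First I unpack $P$. Writing $c(01^{k-1},n)=c_1\cdots c_n$, one has, for $j\le n-1$, that $c_j=0$ if $j\equiv 1\pmod{k}$ and $c_j=1$ otherwise, while $c_n=\ol{c_{n-k}}$; apart from a handful of letters it leaves free, this periodic-with-a-flipped-tail shape is exactly the one that Lemma~\ref{lem:constraint} forces on any \upword{} for $A^n$ whose $\Diamond$ sits at position $k\in\{2,\ldots,n-1\}$ and whose first $k-1$ letters are $01^{k-2}$. The word $P$ has $k+1$ windows of length $n$: the windows starting at positions $1,\ldots,k$ each contain the $\Diamond$ and so contribute two factors, whereas the window starting at position $k+1$ equals $c(01^{k-1},n)$ and contributes one factor. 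Using the periodicity of $c(01^{k-1},n)$ I check that these $2k+1$ factors are pairwise distinct, so that $P$ is a legitimate prefix of \an{} \upword{} and the associated edge set $F\subseteq E(G_A^{n-1})$ has exactly $2k+1$ elements.

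The heart of the proof is the analysis of $G'=G_A^{n-1}\setminus F$. For $i=1,\ldots,k+2$ let $S_i$ denote the set of length-$(n-1)$ words consistent with the length-$(n-1)$ window of $P$ starting at position $i$; the windows at positions $1,\ldots,k$ contain the $\Diamond$, so $S_1,\ldots,S_k$ have two elements each, whereas $S_{k+1}$ and $S_{k+2}$ are singletons, and every edge of $F$ runs from some $S_i$ to $S_{i+1}$. Since each vertex of $G_A^{n-1}$ has in-degree and out-degree $2$, removing $F$ lowers the out-degree (resp.\ in-degree) of a vertex $v$ by the number of edges of $F$ leaving (resp.\ entering) $v$, so I must track which of the at most $2k+2$ structured vertices in $S_1\cup\cdots\cup S_{k+2}$ coincide. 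The decisive coincidence is that the single element of $S_{k+1}$ equals the element of $S_1$ obtained by setting the $\Diamond$ to $1$, and moreover both edges of $F$ arising from the $k$-th window are directed into $S_{k+1}$; consequently all of the two incoming and two outgoing $G_A^{n-1}$-edges at $S_{k+1}$ lie in $F$, so $S_{k+1}$ becomes isolated in $G'$. A careful accounting then shows that in $G'$ every other vertex has equal in- and out-degree, except the single element of $S_{k+2}$ (out-degree one more than in-degree) and the element of $S_1$ obtained by setting the $\Diamond$ to $0$ (in-degree one more than out-degree). Since the edges of $F$ touch only the structured vertices in $S_1\cup\cdots\cup S_{k+2}$, one also checks that $G'$ stays connected after the single isolated vertex is deleted. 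Euler's theorem, applied as in the proof of Theorem~\ref{thm:universal}, now yields an Eulerian path of $G'$ starting at the element of $S_{k+2}$, that is, at the state reached after $P$; prepending $P$ produces a linear \upword{} for $A^n$ with a single $\Diamond$ at position $k$ that begins with $01^{k-2}\Diamond c(01^{k-1},n)$.

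I expect the main obstacle to be the bookkeeping in the third paragraph: one has to identify \emph{every} coincidence among the vertices of $S_1,\ldots,S_{k+2}$ --- this is precisely where the particular continuation $c(01^{k-1},n)$ earns its keep --- so that the accumulated degree drops leave exactly one vertex with a surplus outgoing edge and one with a surplus incoming edge (rather than, say, a vertex whose surplus is two), and so that the only vertex that becomes isolated is the expected one, keeping $G'$ connected. By comparison, the distinctness of the $2k+1$ factors is a short computation with the period-$k$ pattern, and once the degree and isolation facts are established the connectivity of $G'$ goes through much as in the proof of Theorem~\ref{thm:diamond-at-pos-1}.
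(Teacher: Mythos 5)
Your skeleton is the paper's proof: prescribe the prefix $01^{k-2}\Diamond c(01^{k-1},n)$, delete the $2k+1$ corresponding edges $F$ from $G_A^{n-1}$, note that the unique element of $S_{k+1}$ coincides with the element of $S_1$ obtained by setting $\Diamond=1$ and therefore has all four of its De Bruijn edges in $F$ and becomes isolated, record the degree surpluses at the element of $S_{k+2}$ (start of the Euler path) and at the element of $S_1$ with $\Diamond=0$ (end of it), and invoke Euler. All of that bookkeeping is correct and matches the paper.

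The genuine gap is the connectivity of $G'$, which you dispatch with ``one also checks'' and the remark that it ``goes through much as in the proof of Theorem~\ref{thm:diamond-at-pos-1}.'' It does not: in that proof only three edges clustered around $0^{n-1}$ are removed and connectivity is immediate, whereas here up to $2n-1$ edges spread over up to $2n-1$ vertices are removed, and the observation that $F$ touches only the structured vertices cannot by itself rule out a disconnection --- indeed deleting $F$ \emph{does} isolate $v_{k+1}$, so local degree loss demonstrably severs vertices. This verification is the bulk of the paper's argument: from an arbitrary vertex one follows $0$-edges toward $0^{n-1}$, detours via $1$-edges through $1^{n-1}$ (or through $1^{n-3}01$ in the case $k=n-1$) whenever the next $0$-edge would lie in $F$, and checks edge by edge, using the precise shape of the structured vertex set (trailing-$1$ counts, occurrences of the factors $00$, $010$ and $0101$), that no edge of $F$ is ever used; this also forces $n\geq 5$, so the cases $n=3,4$ must be settled by the explicit examples in Table~\ref{tab:upwords1}, which your proposal does not address. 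The other deferred step --- that the \emph{only} coincidence among the $2k+2$ structured vertices is $v_1^1=v_{k+1}$ --- is likewise not just ``a short computation with the period-$k$ pattern'' (the paper needs separate cases for $k=2$, $k=n-2$ and $k=n-1$), but you at least identify it correctly as work to be done; the missing connectivity argument is the serious omission.
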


The idea of the proof of Theorem~\ref{thm:diamond-at-pos-k} is a straightforward generalization of the approach we used to prove Theorem~\ref{thm:diamond-at-pos-1} before, and boils down to showing that the De Bruijn graph $G_A^{n-1}$ without the edges that are given by the prescribed \upword{} prefix still has an Eulerian path.

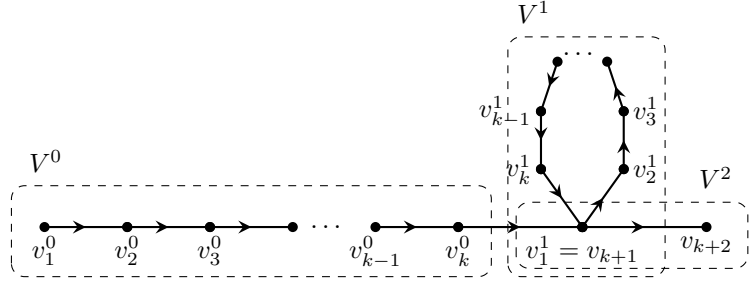
\begin{figure}
\begin{center}
\begin{tikzpicture}[scale=1.1]
\draw[rounded corners=5pt,dashed] (-0.4,-0.6) rectangle ++(5.8,1.1);
\draw[rounded corners=5pt,dashed] (5.6,-0.6) rectangle ++(1.9,2.9);
\draw[rounded corners=5pt,dashed] (5.7,-0.5) rectangle ++(2.8,0.8);

\draw [directed,thick](0,0)--++(1,0);
\draw [directed,thick](1,0)--++(1,0);
\draw [directed,thick](2,0)--++(1,0);
\draw [directed,thick](4,0)--++(1,0);
\draw [directed,thick](5,0)--++(1.5,0);
\draw [directed,thick](6.5,0)--++(1.5,0);

\draw [directed,thick](6.5,0)--++(0.5,0.7);
\draw [directed,thick](7,0.7)--++(0,0.7);
\draw [directed,thick](7,1.4)--++(-0.2,0.6);
\draw [directed,thick](6.2,2)--++(-0.2,-0.6);
\draw [directed,thick](6,1.4)--++(0,-0.7);
\draw [directed,thick](6,0.7)--++(0.5,-0.7);

\node[below] at (0,0) {$v_1^0$};
\node[below] at (1,0) {$v_2^0$};
\node[below] at (2,0) {$v_3^0$};
\node[below] at (4,0) {$v_{k-1}^0$};
\node[below] at (5,0) {$v_k^0$};
\node[below] at (6.5,0) {$v_1^1=v_{k+1}$};
\node[below] at (8,0) {$v_{k+2}$};
\node[right] at (7,0.7) {$v_2^1$};
\node[right] at (7,1.4) {$v_3^1$};
\node[left] at (6,1.4) {$v_{k-1}^1$};
\node[left] at (6,0.7) {$v_k^1$};
\node[right] at (3.1,0) {$\ldots$};
\node[right] at (6.15,2.1) {$\ldots$};
\node[right] at (-0.3,0.8) {$V^0$};
\node[right] at (5.6,2.6) {$V^1$};
\node[right] at (7.8,0.6) {$V^2$};
\end{tikzpicture} \\[1ex]
\caption{Subgraph of $G_A^{n-1}$ constructed in the proof of Theorem~\ref{thm:diamond-at-pos-k}.}
\label{fig:diamond-at-pos-k}
\end{center}
\end{figure}

\begin{proof}
The words $0\Diamond c(01,3)100=0\Diamond 011100$, $0\Diamond c(01,4)11011110000=0\Diamond 010011011110000$ and $01\Diamond c(011,4)100001010=01\Diamond 0111100001010$ from Table~\ref{tab:upwords1} show that the statement is true for $n=3$ and $n=4$.
For the rest of the proof we assume that $n\geq 5$.
Consider the word $w=w_1w_2\cdots w_{k+n}:=01^{k-2}\Diamond c(01^{k-1},n)$.
For $i=1,2,\ldots,k$ we let $v_i^0$ and $v_i^1$ denote the two words from $S(w,i,n-1)$ obtained by substituting $\Diamond$ in $w$ by $0$ or $1$, respectively.
Moreover, let $v_{k+1}=w_{k+1}\cdots w_{k+n-1}$ be the unique word from $S(w,k+1,n-1)$ and $v_{k+2}=w_{k+2}\cdots w_{k+n}$ the unique word from $S(w,k+2,n-1)$, and define $V^0:=\{v_i^0 \mid i=1,2,\ldots,k\}$, $V^1:=\{v_i^1 \mid i=1,2,\ldots,k\}$, $V^2:=\{v_{k+1},v_{k+2}\}$ and $V':=V^0\cup V^1\cup V^2$.
We proceed to show that $|V'|=2k+1$, i.e., only two of the words just defined coincide, namely $v_1^1=v_{k+1}$ ($v_1^1$ is given by the first $n-1$ letters of $w=01^{k-1}c(01^{k-1},n)$, and $v_{k+1}$ is given by the first $n-1$ letters of $c(01^{k-1},n)$, which are equal).
In other words, the corresponding set of vertices in $G_A^{n-1}$ has size $2k+1$ (see Figure~\ref{fig:diamond-at-pos-k}).
If $k=2$, then this can be verified directly by considering the number of leading and trailing 0s and 1s of the vertices in $V^0$, $V^1$ and $V^2$.
We now assume that $k\geq 3$.
Every word from $V^0$, except possibly $v_1^0$, contains the factor 00 exactly once and is uniquely identified by the position of this factor, proving that $|V^0|=k$.
The words in $V^1$ are all uniquely identified by the number of leading 1s, which equals 0 for $v_1^1$ and $k-i+1$ for $i=2,3,\ldots,k$, implying that $|V^1|=k$.
We now show that $V^0$ and $V^1$ are disjoint.
To prove this we use again that all the words in $V^0$, except possibly $v_1^0$, contain the factor 00, and that moreover no word from $V^1$ contains this factor.
However $v_1^0$ does not contain the factor 00 only in the case $k=n-1$, and then $v_1^0$ starts and ends with 0, unlike any of the words from $V^1$ in this case, proving that $V^0$ and $V^1$ are disjoint.
It remains to show that $v_{k+2}\notin V^0\cup V^1$.
If $k=n-1$, then $v_{k+2}=1^{n-1}$ and all other words from $V^0$ and $V^1$ contain at least one 0, so $v_{k+2}\notin V^0\cup V^1$.
If $k\leq n-2$, then the word $v_{k+2}=w_{k+2}\cdots w_{k+n}$ satisfies $w_{k+n}=\ol{w_n}$, i.e., its last letter and the one $k$ positions to the left of it are complementary (recall the definition of $c(01^{k-1},n)$), a property that does not hold for any of the words in $V^1$, implying that $v_{k+2}\notin V^1$.
Moreover, in this case all words from $V^0$ contain the factor 00 exactly once and are uniquely identified by the position of this factor, and $v_{k+2}$ might contain the factor 00 only at the last two positions, so the only potential conflict could arise in the case $k=n-2$ when $v_1^0=01^{n-4}00$ ends with 00.
However, in this case $v_{k+2}=1^{n-3}00$ is still different from $v_1^0$.
We conclude that $v_{k+2}\notin V^0\cup V^1$ in all cases.
Combining these observations shows that $|V'|=|V^0|+|V^1|+|V^2|-1=2k+1$, as claimed.

Consider the set of $2k+1$ edges $E':=\{(v_i^0,v_{i+1}^0)\mid i=1,2,\ldots,k-1\}\cup \{(v_i^1,v_{i+1}^1)\mid i=1,2,\ldots,k-1\}\cup\{(v_k^0,v_{k+1}),(v_k^1,v_{k+1}),(v_{k+1},v_{k+2})\}$ in the De Bruijn graph $G_A^{n-1}$ (see Figure~\ref{fig:diamond-at-pos-k}).
They span a subgraph on $V'$ that has the following pairs of out-degrees and in-degrees: $(1,0)$ for the vertex $v_1^0$, $(0,1)$ for the vertex $v_{k+2}$, $(1,1)$ for the vertices $v_i^0$ and $v_i^1$, $i=2,3,\ldots,k$, $(2,2)$ for the vertex $v_1^1=v_{k+1}$.

We denote the graph obtained from $G_A^{n-1}$ by removing the edges in $E'$ and the isolated vertex $v_1^1=v_{k+1}$ by $G'$.
Clearly, every vertex in $G'$ has the same in- and out-degree (1 or 2), except the vertex $v_{k+2}$ which has one more out-edge than in-edges, and the vertex $v_1^0$ which has one more in-edge than out-edges.
To complete the proof of the theorem we show that $G'$ contains an Eulerian path (which must start at $v_{k+2}$ and end at $v_1^0$), and to do this, it suffices (by the before-mentioned degree conditions) to show that $G'$ is connected.

We first consider the case $k\leq n-2$:
From any vertex $v\in G'$, we follow 0-edges until we either reach the vertex $0^{n-1}$ or a vertex from $V'$ for which the next 0-edge is from $E'$ (this could happen right at the beginning if $v\in V'$).
In this case we follow 1-edges until we reach the vertex $1^{n-1}$, and from there we follow 0-edges until we reach $0^{n-1}$.
(We only ever follow edges in forward direction.)
We claim that in this process we never use an edge from $E'$, which shows that $G'$ is connected.
To see this suppose we encounter a vertex $v'$ from $V'$ for which the next 0-edge is from $E'$.
This means that $v'$ has $k-1$ trailing 1s (here we use that $k\leq n-2$), so following a 1-edge leads to a vertex that has $k$ trailing 1s, and in the next step to a vertex that has $k+1$ trailing 1s.
Note that the vertices in $V'\setminus\{v_{k+2}\}$ have at most $k-1$ trailing 1s, and $v_{k+2}$ has at most $k$ trailing 1s, so we avoid any edges from $E'$ on our way to $1^{n-1}$.
Moreover, on the way from $1^{n-1}$ to $0^{n-1}$ via $1^{n-1-i}0^i$, $i=1,2,\ldots,n-1$, we do not use any edges from $E'$ either, because any vertex from $V'\setminus\{v_{k+2}\}$ that starts with a 1 has at least two transitions from 1s to 0s, or vice versa, when reading it from left to right (using again $k\leq n-2$), and $0^{n-1}\notin V'$.

Now consider the case $k=n-1$:
From any vertex $v\in G'$, we follow 0-edges until we either reach the vertex $0^{n-1}$ or the only vertex $v_1^0=01^{n-3}0$ from $V'\setminus \{v_{k+1}\}$ for which the next 0-edge is from $E'$.
In this case we follow a single 1-edge to $1^{n-3}01=v_3^1$, and from there we follow 0-edges until we reach $0^{n-1}$.
Similarly to before, we need to argue that we never use an edge from $E'$ in this process.
On the way from $1^{n-3}01=v_3^1$ to $0^{n-1}$ we never use any edges from $E'$, because any vertices on this path except the first one $1^{n-3}01$ and the last two $10^{n-2}$ and $0^{n-1}$ contain the factor 010, so all these vertices are different from $V'$ (for $n\geq 5$ and $k=n-1$ no word from $V'$ contains 010 as a factor), implying that all edges except possibly the last one are safe.
However, since $0^{n-1}\notin V'$, the last edge is safe, too.

These arguments show that $G'$ is connected, so it has an Eulerian path, and this Eulerian path yields the desired \upword{} that begins with $w$.
This completes the proof.
\end{proof}

\section{Linear \upwords{} with two \texorpdfstring{$\Diamond$s}{diamonds}}
\label{sec:two-diamonds}

In this section we focus on binary alphabets.
Many of the non-existence conditions provided in this section can be generalized straightforwardly to non-binary alphabets, as we briefly discuss in Section~\ref{sec:outlook} below.

\subsection{Non-existence results}

\begin{theorem}
\label{thm:two-diamonds-non}
For $A=\{0,1\}$ and any $n\geq 5$, there is no linear \upword{} for $A^n$ with two $\Diamond$s of the form $u=x\Diamond y\Diamond z$ if $|x|,|y|,|z|\geq n$ or $|x|=n-1$ or $|z|=n-1$ or $|y|\leq n-2$.
\end{theorem}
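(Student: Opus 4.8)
The plan is to analyze the spanning subgraph $H(u,n)$ of the De Bruijn graph $G_A^n$ associated with a hypothetical \upword{} $u=x\Diamond y\Diamond z$, using Observation~\ref{obs:Hu-degrees}, Observation~\ref{obs:common-neighbors}, and especially Lemma~\ref{lem:constraint} (applied to each $\Diamond$ individually, and to the reverse of $u$) to derive forced letter patterns that ultimately produce a repeated factor. I would split into the four cases listed in the statement and handle each by a short forcing argument.

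First I would dispose of the case $|x|,|y|,|z|\geq n$. Since $|x|\geq n$, the first $\Diamond$ (at position $k:=|x|+1$) has $k+n\leq N$, and since $|x|\geq n>n-1$ we have $k\geq n+1$, so Lemma~\ref{lem:constraint} applied at this $\Diamond$ forces $u_{k+i}=u_i$ for all $i=1,\dots,n-1$ (the clause $u_n\neq\Diamond$ holds because $n\leq|x|$ means $u_n$ is a genuine letter). But then the length-$n$ window starting at position $1$ and the one starting at position $k+1$ agree in their first $n-1$ letters; to get a contradiction I would instead note more carefully that Lemma~\ref{lem:constraint} in the binary case also gives $u_{k+n}=\ol{u_n}$ only when $k=n$, so for $k>n$ we actually need the sharper observation that the $n-1$ forced equalities, combined with the analogous statement obtained by applying Lemma~\ref{lem:constraint} to the reverse of $u$ at the second $\Diamond$ (using $|z|\geq n$), pin down enough letters around both $\Diamond$s that some word in $A^n$ is seen twice. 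The cleanest route is: the vertices of $S(u,1,n)$ have in-degree $0$ and the vertices of $S(u,k+1,n)$ are forced (via the $v_x$ argument of Lemma~\ref{lem:constraint}) to coincide as a set with $S(u,1,n)$, so the factor starting at position $k+1$ already appeared at position $1$, contradicting universality.

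For the remaining three cases I would argue as follows. If $|x|=n-1$, then the first $\Diamond$ sits at position $k=n$, so $u_1\cdots u_{n-1}$ are letters, $u_n=\Diamond$, and (provided $|y|\geq 1$, i.e.\ there is at least one more symbol, which holds since $|y|\le n-2$ is not yet assumed here but $n\ge5$ forces $N$ large enough that $k+n\le N$) Lemma~\ref{lem:constraint} gives $u_{n+i}=u_i$ for $i=1,\dots,n-1$; then $u_{n+1}\cdots u_{2n}$ repeats $u_1\cdots u_n$ after substitution, i.e.\ the vertex in $S(u,n+1,n)$ lies in $S(u,1,n)$, a contradiction. The case $|z|=n-1$ is identical after reversing $u$. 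Finally, if $|y|\leq n-2$, then the two $\Diamond$s are within distance $n-1$ of each other, so every length-$n$ window of $u$ that covers the first $\Diamond$ also covers, or comes within the relevant range of, the second: concretely, $|y|\le n-2$ means there is a length-$n$ window containing both $\Diamond$s, so after substitution this window can realize $\alpha^2=4$ distinct factors, which forces (counting degrees in $H(u,n)$ via Observation~\ref{obs:Hu-degrees}) a branching structure that, together with Lemma~\ref{lem:constraint} applied to whichever $\Diamond$ has $\geq n$ letters on one side (one of them must, since $n\ge5$ and $|y|\le n-2$ forces $|x|\ge n$ or $|z|\ge n$ once the total length $N=2^n+n-1$ is taken into account, and the sub-cases $|x|=n-1$, $|z|=n-1$ are already excluded), again produces a repeated factor.

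I expect the main obstacle to be the case $|y|\le n-2$: unlike the other three, it cannot be settled by a single clean application of Lemma~\ref{lem:constraint}, because neither $\Diamond$ is guaranteed to have $n$ letters immediately following or preceding it within the "$y$" block. The resolution is a careful case distinction on which of $|x|,|z|$ is at least $n$ (using that the excluded values $n-1$ and the already-excluded "both $<n-1$" scenario leave only these, given the fixed total length), reducing $|y|\le n-2$ to an application of Lemma~\ref{lem:constraint} at the appropriate $\Diamond$ together with a short argument that the two $\Diamond$s being close forces the forced prefix-letters to collide with the letters sitting between the two $\Diamond$s, yielding a duplicated length-$n$ factor. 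I would present this last case with an explicit picture in the style of Figure~\ref{fig:constraint}, tracking the sets $S(u,i,n)$ across the short block $y$.
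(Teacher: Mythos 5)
Your overall strategy---the same four-way case split, driven by Lemma~\ref{lem:constraint} plus a counting argument---is the paper's strategy, but as written the proposal has concrete errors in three of the four cases. In the case $|x|,|y|,|z|\geq n$, your ``cleanest route'' is false: the mechanism of Lemma~\ref{lem:constraint} places the \emph{siblings} $v_x=v_1\cdots v_{n-1}x$, $x\neq v_n$, of each $v\in S(u,k+1,n)$ into $S(u,1,n)$; it does not identify $S(u,k+1,n)$ with $S(u,1,n)$. Indeed the lemma's second clause gives $u_{k+n}=\ol{u_n}$ whenever $u_n\neq\Diamond$ (not ``only when $k=n$'', as you assert), so here $y_1\cdots y_n=x_1\cdots x_{n-1}\ol{x_n}\neq x_1\cdots x_n$ and there is no collision between the windows at positions $1$ and $k+1$. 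The actual contradiction comes from applying the lemma \emph{forward} at \emph{both} $\Diamond$s (each time comparing against the prefix $x_1\cdots x_n$ of $u$), which forces $y_1\cdots y_n=x_1\cdots x_{n-1}\ol{x_n}=z_1\cdots z_n$, a factor appearing twice; your alternative of applying the lemma to the reverse of $u$ at the second $\Diamond$ constrains the \emph{suffixes} of $y$ and $z$ and does not produce a repeated factor either. In the case $|x|=n-1$ you never verify the hypothesis $u_{k+n}\neq\Diamond$ of Lemma~\ref{lem:constraint}; it fails exactly when $|y|=n-1$, since then $u_{2n}$ is the second $\Diamond$. One first needs a factor count (at most $2(|y|+1)+4(|z|+1)\leq 6n<2^n$ if $|y|,|z|\leq n-1$) to force $|y|\geq n$ or $|z|\geq n$, and then applies the lemma at whichever $\Diamond$ is followed by $n$ letters of $A$.

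The case $|y|\leq n-2$, which you yourself flag as unresolved, is genuinely missing. Your counting step rests on ``$N=2^n+n-1$'', which is false for partial words: each window containing $d$ $\Diamond$s accounts for $2^d$ factors, so $N<2^n+n-1$ whenever a $\Diamond$ is present; the correct route is again to bound the number of factors (yielding $|x|\geq n$ or $|z|\geq n$). More importantly, the mechanism that closes this case is absent from your sketch. Assuming w.l.o.g.\ $|z|\geq n$ and setting $k:=|y|+1\leq n-1$, apply Lemma~\ref{lem:constraint} at the \emph{first} $\Diamond$: its conclusion $u_{k_1+i}=u_i$ (for $u_i\neq\Diamond$) forces $u_i=\Diamond$ at every offset $i$ where $u_{k_1+i}=\Diamond$, and since the second $\Diamond$ sits at offset $k$ from the first, this forces $u_k=\Diamond$, i.e., $|x|=|y|=k-1$. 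For $k\geq 2$ one then gets the periodicity $z_{i+k\ell}=y_i=x_i$, so every factor realized by the window starting at position $k+1$ is already realized by the window starting at position $1$; the degenerate sub-case $k=1$ (adjacent $\Diamond$s at the start) needs a separate short letter-forcing argument. None of this is recoverable from ``the forced prefix-letters collide with the letters between the two $\Diamond$s'' without doing the work.
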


As Table~\ref{tab:upwords2} shows, there are examples of linear \upwords{} with two $\Diamond$s whenever the conditions in Theorem~\ref{thm:two-diamonds-non} are violated.
Put differently, for every \upword{} $u=x\Diamond y\Diamond z$ in the table for $n\geq 5$ we have that one of the numbers $|x|,|y|,|z|$ is at most $n-1$, $|x|\neq n-1$, $|z|\neq n-1$ and $|y|\geq n-1$.
Note that already by the first condition $|x|,|y|,|z|\geq n$, a $(1-o(1))$-fraction of all choices of placing two $\Diamond$s among $N=\Theta(2^n)$ positions are excluded as possible candidates for \upwords{}.

\begin{proof}
We first assume that $|x|,|y|,|z|\geq n$, i.e., $y_n,z_n\in A$.
Applying Lemma~\ref{lem:constraint} yields $z_i=y_i=x_i\in A$ for $i=1,2,\ldots,n-1$ and $z_n=y_n=\ol{x_n}$, so the word $y_1y_2\cdots y_n=z_1z_2\cdots z_n$ appears twice as a factor in $u$, a contradiction.

We now assume that $|x|=n-1$ (the case $|z|=n-1$ follows by symmetry).
Note that the number of factors of $u$ is at most $2(|y|+1)+4(|z|+1)$:
This is because every subword ending at the first $\Diamond$ or at a letter from $y$ contains at most one $\Diamond$, giving rise to two factors, and every subword ending at the second $\Diamond$ or at a letter from $z$ contains at most two $\Diamond$s, giving rise to four factors.
This number is at most $2n+4n=6n$ for $|y|,|z|\leq n-1$, which is strictly less than $2^n$ for $n\geq 5$.
Therefore, we must have $|y|\geq n$ or $|z|\geq n$ in this case.
We assume w.l.o.g.\ that $|y|\geq n$, i.e., $y_n\in A$.
Applying Lemma~\ref{lem:constraint} yields $y_i=x_i\in A$ for $i=1,2,\ldots,n-1$, implying that the word $y_1y_2\cdots y_n$ appears twice as a factor in $u$, a contradiction.

We now assume that $|y|\leq n-2$.
In this case we must have $|x|\geq n$ or $|z|\geq n$, because if $|x|,|z|\leq n-1$ then the number of factors of $u$ is at most $2(|y|+1)+4(|z|+1)\leq 2(n-1)+4n\leq 6n$, which is strictly less than $2^n$ for $n\geq 5$.
We assume w.l.o.g.\ that $|z|\geq n$.
Let $k:=|y|+1\leq n-1$ and consider the subword $y':=y\Diamond z_1z_2\cdots z_{n-k}$ of $u$, which is well-defined since $|z|\geq n$ ($k$ is the position of the $\Diamond$ in $y'$).
Since $k\leq n-1$ we have $y'_n=z_{n-k}\in A$.
Applying Lemma~\ref{lem:constraint} yields that $|x|=|y|$.
Moreover, if $k=1$ ($|x|=|y|=0$) then the same lemma yields $y'_2=y'_3=\cdots =y'_{n-1}=\ol{y'_n}$, i.e., $z_1=z_2=\cdots=z_{n-2}=\ol{z_{n-1}}$ and $z_{n-1}=z_{n-3}$, a contradiction.
On the other hand, if $k\geq 2$, then $z_{i+k\ell}=y_i=x_i$ for all $i=1,2,\ldots,k-1$ and $\ell=0,1,\ldots$ with $i+k\ell\leq n-1$, i.e., the factors obtained from the subword $y'$ in $u$ appear twice, starting at position 1 and position $k+1$, a contradiction.
\end{proof}

\begin{corollary}
\label{cor:dia-dia}
For $A=\{0,1\}$ and any $n\geq 2$, $\Diamond\Diamond$ for $n=2$ and $\Diamond\Diamond 0111$ for $n=3$ are the only linear \upwords{} for $A^n$ containing two $\Diamond$s that are adjacent (up to reversal and letter permutations).
\end{corollary}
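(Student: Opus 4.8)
The plan is to write any such \upword{} as $u=x\Diamond\Diamond z$ with $x$ and $z$ diamond-free, and, using reversal and a letter permutation as permitted by the normalization conventions of Section~\ref{sec:prelim}, to assume $|x|\le|z|$ and that the first actual letter of $u$ is $0$. Put $p:=|x|$ and $q:=|z|$. Since the two $\Diamond$s are adjacent, $u$ has the form $x\Diamond y\Diamond z$ with $|y|=0\le n-2$ for every $n\ge2$, so for all $n\ge5$ the non-existence is immediate from Theorem~\ref{thm:two-diamonds-non}. It remains to handle $n=2,3,4$ by hand and to confirm that $\Diamond\Diamond$ and $\Diamond\Diamond0111$ really are \upwords{} (the former is the trivial \upword{} $\Diamond^n$ for $n=2$, and the latter is checked directly, as already noted in Section~\ref{sec:prelim}).

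The workhorse for the small cases is the following elementary remark: if two length-$n$ windows of $u$ both contain the block $\Diamond\Diamond$ and overlap, they share a common factor, which is then counted at least twice, contradicting universality. Concretely, the window starting at position $p$ reads $x_p\Diamond\Diamond\cdots$, the one at position $p+1$ reads $\Diamond\Diamond z_1z_2\cdots$, and so on. For $n=2$ this shows at once that $p\ge1$ makes $x_p0$ and $x_p1$ occur at both positions $p$ and $p+1$, forcing $p=0$, and symmetrically $q=0$; hence $u=\Diamond\Diamond$. For $n=3$, if $p\ge1$ then also $q\ge1$ and the word $x_p0z_1$ occurs both inside $x_p\Diamond\Diamond$ (position $p$) and inside $\Diamond\Diamond z_1$ (position $p+1$), a contradiction; so $p=0$ and $u=\Diamond\Diamond z$. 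For $n=4$, the same trick on the windows at positions $p-1$ and $p+1$ shows that $p\ge2$ forces $x_{p-1}x_pz_1z_2$ to occur twice; hence $p\le1$.

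It then remains to analyze $u=\Diamond\Diamond z$ for $n=3$, and $u$ with $p\in\{0,1\}$ for $n=4$. For $n=3$: the first window $\Diamond\Diamond z_1$ already produces all four words of $A^3$ ending in the letter $z_1$, so no later window may end in that letter; since the window starting at position $j+2$ ends in $z_j$ for $j\ge1$, this forces $z_j=\ol{z_1}$ for all $j\ge2$, i.e.\ $z=01^{q-1}$ (taking $z_1=0$). A count of factors shows $q\ge4$, while for $q\ge5$ the word $111$ appears at positions $4$ and $5$; hence $q=4$ and $u=\Diamond\Diamond0111$. For $n=4$ with $p=1$ (so $u=0\Diamond\Diamond z_1z_2\cdots$), Lemma~\ref{lem:constraint} with $k=2$ shows that $q\ge3$ would force $u_3=u_1=0$, contradicting $u_3=\Diamond$; so $q\le2$, and then $u$ has at most $8<16$ factors. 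For $n=4$ with $p=0$ (so $u=\Diamond\Diamond z_1z_2\cdots$), Lemma~\ref{lem:constraint} with $k=1$ gives $z_3=\ol{z_1}$ when $q\ge3$, and with $k=2$ gives $z_3=z_1$ when $q\ge4$; this rules out $q\ge4$, and for $q\le3$ the word $u$ again has too few factors (at most $6<16$). This exhausts all cases.

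The only genuinely delicate point is that Theorem~\ref{thm:two-diamonds-non} yields nothing below $n=5$, so the cases $n=2,3,4$ really do require separate hands-on arguments; the main bookkeeping effort is in the $n=4$ analysis — keeping straight which pair of overlapping $\Diamond\Diamond$-windows forces which repeated factor, and verifying the factor counts in the residual ``$\Diamond\Diamond z$'' configurations — but each individual step is short. Note also that the only construction the proof produces beyond the trivial $\Diamond\Diamond$ is $\Diamond\Diamond0111$, arising precisely because $q=4$ is the unique value for which $\Diamond\Diamond z$ has the right factor count with no forced repetition of $111$.
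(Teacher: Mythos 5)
Your proof is correct and follows the same overall strategy as the paper's: Theorem~\ref{thm:two-diamonds-non} disposes of $n\geq 5$ since $|y|=0$, and the remaining cases $n\leq 4$ are settled by hand using Lemma~\ref{lem:constraint} together with factor counting. The only place the details diverge is the $n=4$ bookkeeping: your overlapping-window observation (two length-$n$ windows both containing the block $\Diamond\Diamond$ share a factor counted twice) eliminates $|x|\geq 2$ outright, whereas the paper refines the factor-count estimate from the proof of Theorem~\ref{thm:two-diamonds-non} and then excludes the one surviving configuration $u_1u_2u_3\Diamond\Diamond u_6u_7u_8$ by showing $0000$ would occur twice; your route is, if anything, slightly cleaner at that point.
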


\begin{proof}
The non-existence of linear \upwords{} with two adjacent $\Diamond$s for $n\geq 5$ follows from Theorem~\ref{thm:two-diamonds-non}, because for such \an{} \upword{} $u=x\Diamond\Diamond z$ the subword $y$ between the two $\Diamond$s is empty, so $|y|=0\leq n-2$.
For $n=4$ and $|y|=0$ the estimate in the third part in the proof of Theorem~\ref{thm:two-diamonds-non} can be strengthened to show that if $|x|,|z|\leq n-1$, then the number of factors of $u$ is strictly less than $4n\leq 2^n$ unless $u=u_1u_2u_3\Diamond\Diamond u_6u_7u_8$, which means we can continue the argument as before, leading to a contradiction.
The exceptional case $u=u_1u_2u_3\Diamond\Diamond u_6u_7u_8$ can be excluded as follows: Applying Lemma~\ref{lem:constraint} shows that $u_2=u_6$ and $u_3=u_7$, and then it becomes clear that the factor $0000$, at whatever position within $u$ it is placed, would appear twice.
For $n=3$ the only possible linear \upwords{} with two adjacent $\Diamond$s by Lemma~\ref{lem:constraint} are $u=\Diamond\Diamond u_3\ol{u_3}\ol{u_3}u_6$, which leads to $\Diamond\Diamond 0111$ (w.l.o.g.\ $u_3=0$, and for 111 to be covered we must have $u_6=1$), and $u=u_1\Diamond\Diamond u_4$ is impossible because $u_10u_4$ appears twice as a factor (starting at positions 1 and 2).
For $n=2$ the only possible linear \upword{} with two $\Diamond$s is $\Diamond\Diamond$.
\end{proof}

\subsection{Existence results}

Our next result provides an infinite number of binary linear \upwords{} with two $\Diamond$s (see Table~\ref{tab:upwords2}).

\begin{theorem}
\label{thm:two-diamonds}
For $A=\{0,1\}$ and any $n\geq 4$, there is a linear \upword{} for $A^n$ with two $\Diamond$s that begins with $\Diamond 0^{n-1}1^{n-2}\Diamond 1 0^{n-2} 1$.
\end{theorem}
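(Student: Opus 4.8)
We follow the strategy used for Theorems~\ref{thm:diamond-at-pos-1} and~\ref{thm:diamond-at-pos-k}: we prescribe a prefix $w$ of the \upword{}, delete from the De Bruijn graph $G_A^{n-1}$ the edges that $w$ forces, and argue that the rest of the word can be read off an Eulerian path of what remains. Here $w:=\Diamond 0^{n-1}1^{n-2}\Diamond 1 0^{n-2}1$, which has length $3n-1$ and has its two $\Diamond$s at positions $1$ and $2n-1$. The cases $n\in\{4,5\}$ are settled by the explicit \upwords{} in Table~\ref{tab:upwords2}, so assume $n\ge 6$.

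The first step is to list the $n$-letter factors of $w$, i.e.\ the words of $A^n$ obtained from the $2n$ length-$n$ windows of $w$ by substituting $0$ or $1$ for each $\Diamond$. These turn out to be the $3n+1$ words $0^n$, $10^{n-1}$, $0^{n-j+1}1^{j-1}$ for $2\le j\le n-1$, $01^{n-1}$, $01^{n-2}0$, $1^n$, $1^{n-2}01$, $1^{n-1-s}0^{s+1}$ and $1^{n-3-s}010^{s+1}$ for $0\le s\le n-3$, and $10^{n-2}1$. Inspecting the run structure of these words shows that they are pairwise distinct; since there are exactly $3n+1$ of them and exactly $3n+1$ window-substitutions, each factor comes from a single window, and in particular $w$ has no repeated factor. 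Let $E'$ be this set, viewed as a set of $3n+1$ edges of $G_A^{n-1}$.

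Now let $G'$ be obtained from $G_A^{n-1}$ by deleting the edges of $E'$ and then all resulting isolated vertices. A careful count of how many edges of $E'$ enter and leave each vertex — the bookkeeping being slightly delicate because of the self-loops $0^n,1^n$ in $E'$ and because a vertex such as $1^{n-2}0$ receives incident $E'$-edges from both ``sides'' of the second $\Diamond$ — shows: the four vertices $0^{n-1},10^{n-2},1^{n-1},1^{n-2}0$ have all of their incident half-edges in $E'$ and are deleted; the vertex $y:=0^{n-2}1$ loses two in-edges and one out-edge, hence in $G'$ has out-degree one greater than its in-degree; the vertex $z:=01^{n-2}$ loses one in-edge and two out-edges, hence has in-degree one greater than its out-degree; and every remaining vertex of $G'$ is balanced. (Pictorially, $E'$ spans a ``branching path'' in $G_A^{n-1}$ that enters at the pair $0^{n-1},10^{n-2}$, runs along $0^{n-2}1\to 0^{n-3}1^2\to\dots\to 01^{n-2}$, then splits and re-merges twice, and finally closes up at $10^{n-2}$.) Granting that $G'$ is connected, it then has an Eulerian path, which by the degree conditions runs from $y$ to $z$; appending the sequence of last letters of this path to $w$ produces a word $u$ whose $n$-letter factors are precisely the edges of $E'$ together with the edges of $G'$ — i.e.\ all $2^n$ edges of $G_A^{n-1}$, each exactly once. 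Hence $u$ is a linear \upword{} for $A^n$, it begins with $w$, and it contains exactly two $\Diamond$s, as required (and $|u|=2^n-2$).

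The remaining point, connectivity of $G'$, is the heart of the argument, and one proves it by a routing argument in the spirit of the proof of Theorem~\ref{thm:diamond-at-pos-k}. Since every edge of $E'$ is a length-$n$ word with at most four maximal runs, any vertex of $G'$ whose label has more runs retains all of its neighbours; building on this, one shows that every vertex of $G'$ can be joined, by a directed $E'$-avoiding path, to a fixed ``generic'' hub vertex, and conversely. The only delicate cases are the $O(n)$ vertices in the ``almost monotone'' region near $0^{n-1}$, $1^{n-1}$, $10^{n-2}$ and $0^{n-2}1$ — precisely those occurring as length-$(n-1)$ windows of $w$ — where the usable edges are constrained; these are handled one family at a time by first following $1$-edges and then $0$-edges to a recognizable vertex, exactly as the ``bad'' vertices are treated in the proof of Theorem~\ref{thm:diamond-at-pos-k}. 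This case analysis is where the main effort lies.
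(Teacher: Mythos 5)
Your setup is correct and is exactly the paper's: you prescribe the prefix $w=\Diamond 0^{n-1}1^{n-2}\Diamond 10^{n-2}1$, identify the $3n+1$ forced factors as a set $E'$ of edges of $G_A^{n-1}$, verify they are pairwise distinct, and your degree bookkeeping agrees with the paper's (the four vertices $0^{n-1},1^{n-1},10^{n-2},1^{n-2}0$ become isolated and are removed, $y=0^{n-2}1$ is the forced start and $z=01^{n-2}$ the forced end of an Eulerian path in the remaining graph $G'$). However, there is a genuine gap: the connectivity of $G'$, which you yourself identify as ``the heart of the argument,'' is not actually proved. You assert that every vertex can be routed to a hub by an $E'$-avoiding directed path and that the delicate cases are ``handled one family at a time by first following $1$-edges and then $0$-edges to a recognizable vertex, exactly as in the proof of Theorem~\ref{thm:diamond-at-pos-k}.'' That template does not transfer: in Theorem~\ref{thm:diamond-at-pos-k} the monotone routes terminate at the hubs $1^{n-1}$ and $0^{n-1}$, but here both of those vertices (as well as $10^{n-2}$ and $1^{n-2}0$) have been deleted from $G'$, so any walk that simply follows $1$-edges (or $0$-edges) until the run saturates leaves the graph. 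Moreover $E'$ contains the edges $1^{n-1-s}0^{s+1}$ and $1^{n-3-s}010^{s+1}$ for all $0\le s\le n-3$, so long monotone stretches are exactly the forbidden ones.

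The paper's actual connectivity argument is different and is the substantive content you are missing: from any vertex of $G'$ one follows $1$-edges and $0$-edges \emph{alternatingly} until reaching either $s=1^{n-3}01$ or the alternating word $t=0101\cdots$, and from there follows $1$-edges to $z=01^{n-2}$; one then checks, using the run structure of the words in $E'$ (no word of $E'$ contains $0010$, $0101$ or $1011$ as a factor, and only two $E'$-edges leave a vertex ending in $1$ via a $0$-edge), that no edge of this walk lies in $E'$. Until you specify such a concrete routing and verify edge by edge that it avoids $E'$, the claim that $G'$ is connected --- and hence the theorem --- is unproven.
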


\begin{proof}
Consider the word $w=w_1w_2\cdots w_{3n-1}:=\Diamond 0^{n-1}1^{n-2}\Diamond 1 0^{n-2} 1$.
It is easy to check that $w$ yields $3n+1$ different factors $x_1x_2\cdots x_n\in A^n$, and each of these factors gives rise to an edge $(x_1x_2\cdots x_{n-1},x_2x_3\cdots x_n)$ in the De Bruijn graph $G_A^{n-1}$.
The set $E'$ of these edges and their end vertices $V'$ form a connected subgraph that has in- and out-degree 1 for all vertices in $V'$ except for $v_0':=0^{n-1}$, $v_1':=1^{n-1}$, $v_2':=10^{n-2}$ and $v_3':=1^{n-2}0$ which have in- and out-degree 2, and $y:=0^{n-2}1$ and $z:=01^{n-2}$ which have in-degree 2 and out-degree 1, or in-degree 1 and out-degree 2, respectively.
We denote the graph obtained from $G_A^{n-1}$ by removing the edges in $E'$ and the vertices $v_0',v_1',v_2'$ and $v_3'$ by $G'$.
Clearly, every vertex in $G'$ has the same in- and out-degree (1 or 2), except the vertex $y$ which has only one outgoing edge, and the vertex $z$ which has only one incoming edge.
To complete the proof of the theorem we show that $G'$ contains an Eulerian path (which must start at $y$ and end at $z$), and to do this, it suffices (by the before-mentioned degree conditions) to show that $G'$ is connected.

If $n=4$, then $G'$ consists only of the edges $(y,010),(010,101),(101,z)$ (a connected graph), so for the rest of the proof we assume that $n\geq 5$.
Consider a vertex $v$ in $G'$ other than $z$.

If $v$ ends with 0, consider the (maximum) number $k$ of trailing 0s.
Note that $k\leq n-3$, as the vertices $v_2'$ and $v_0'$ that correspond to the cases $k\in\{n-2,n-1\}$ are not in $G'$.
From $v$ we follow 1-edges and 0-edges alternatingly, starting with a 1-edge, until we either reach the vertex $s:=1^{n-3}01$ or the vertex $t:=010101\cdots\in A^{n-1}$ (this could happen right at the beginning if $v=t$).
From $s$ or $t$ we follow 1-edges until the vertex $z$.

If $v$ ends with 1, then we do the following: If $v\neq s$ we follow a single 0-edge, and then proceed as before until the vertex $z$.
If $v=s$ we directly follow 1-edges until $z$.
(Note that we only ever follow edges in forward direction.)

We claim that in this process we never use an edge from $E'$, which shows that $G'$ is connected.
To see this we first consider the case that we start at a vertex $v$ with $k\leq n-3$ trailing 0s.
If $k\geq 2$, then the vertex reached from $v$ via a 1-edge is not in $V'$, because no vertex in $V'$ has a segment of $k\leq n-3$ consecutive 0s surrounded by 1s.
Also, none of the next vertices before reaching $t$ is from $V'$, because all contain the factor $0010$, unlike any word in $V'$.
If $k=1$, then the vertex reached from $v$ by following a 1-edge is either $s\in V'$ (then we stop) or not in $V'$, as no other vertex from $V'$ ends with 101.
If it is not in $V'$, then the next vertex reached via a 0-edge could be in $V'$, but all the subsequent vertices until (and including) $t$ are not, since they all contain the factor 0101, unlike any word in $V'$.
This shows that none of the edges traversed from $v$ to $s$ or $t$ is from $E'$.
Moreover, none of the vertices traversed between $s$ and $z$ or between $t$ and $z$ is from $V'$, because they all contain the factor $0101$ or $1011$, unlike any word in $V'$, so we indeed reach $z$ without using any edges from $E'$.

Now consider the case that we start at a vertex $v$ that ends with 1.
The only interesting case is $v\neq s$.
There are only two 0-edges in $E'$ starting at a vertex that ends with 1, namely the edges starting at $v_1'$ and $s$.
However, $v$ is different from $v_1'$ because $v_1'$ is not part of $G'$, and $v$ is different from $s$ by assumption.
We conclude that the 1-edge we follow is not from $E'$.

These arguments show that $G'$ is connected, so it has an Eulerian path, and this Eulerian path yields the desired \upword{} that begins with $w$.
This completes the proof.
\end{proof}

\section{Cyclic \upwords{}}
\label{sec:cyclic}

Throughout this section, all indices are considered modulo the size of the corresponding word.
All the notions introduced in Section~\ref{sec:prelim} can be extended straightforwardly to cyclic \upwords{}, where factors are taken cyclically across the word boundaries.
In particular, when defining the graph $H(u,n)$ for some cyclic \upword{} $=u_1u_2\cdots u_N$ we consider the subsets of words $S(u,i,n)$ cyclically for all $i=1,2,\ldots,N$.
Then the first two statements of Observation~\ref{obs:Hu-degrees} hold for all vertices $S(u,i,n)$, $i=1,2,\ldots,N$.
The next lemma is the analogue of Lemma~\ref{lem:constraint} for cyclic \upwords{}.

\begin{lemma}
\label{lem:constraint-cyclic}
Let $u=u_1u_2\cdots u_N$ be a cyclic \upword{} for $A^n$, where $A=\{0,1,\ldots,\alpha-1\}$ and $n\geq 2$.
If $u_k=\Diamond$ then $u_{k+n}=\Diamond$.
\end{lemma}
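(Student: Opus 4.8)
The statement is the cyclic analogue of Lemma~\ref{lem:constraint}, and I expect the proof to follow the same template via the graphs $H(u,n)$ and the two observations about common neighbours and degrees in $H(u,n)$. Assume $u_k=\Diamond$. By Observation~\ref{obs:Hu-degrees} (extended cyclically, as just discussed), every vertex in $S(u,k+1,n)$ has in-degree $\alpha$ in $H(u,n)$, since the letter $u_k$ immediately preceding position $k+1$ is a $\Diamond$. In particular, for any fixed $v=v_1v_2\cdots v_n\in S(u,k+1,n)$, all $\alpha$ of its in-neighbours in $G_A^n$ are already present as edges of $H(u,n)$ pointing into $v$.

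The key step is to argue that $u_{k+n}$ cannot be a letter from $A$. Suppose for contradiction that $u_{k+n}=x\in A$. Pick $v=v_1v_2\cdots v_n\in S(u,k+1,n)$; note that its last letter $v_n=u_{k+n}=x$ is forced (there is no $\Diamond$ at position $k+n$). By Observation~\ref{obs:common-neighbors}, the $\alpha-1$ vertices $v':=v_1\cdots v_{n-1}x'$ with $x'\in A\setminus\{x\}$ share the same in-neighbour set $\Gamma^-(v)$ with $v$. Since in the full De Bruijn graph each vertex has in-degree exactly $\alpha$, and in $H(u,n)$ every vertex that lies in some $S(u,j,n)$ with $u_{j-1}$ preceding it already has in-degree at least $1$ (in the cyclic setting every vertex has a predecessor block, so this holds for all vertices), the $\alpha$ in-edges of each such $v'$ in $G_A^n$ must coincide with the $\alpha$ in-edges already used by $v$ in $H(u,n)$. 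But then those same $\alpha$ edges would be required to point both into $v$ and into each $v'$ — impossible, since an edge of $G_A^n$ has a unique head. Hence $v'$ must itself lie in $S(u,k+1,n)$ for every $x'\in A\setminus\{x\}$; but all words in $S(u,k+1,n)$ have the same last letter, namely $u_{k+n}=x$, contradicting $v'_n=x'\neq x$. Therefore $u_{k+n}=\Diamond$.

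I expect the main technical point to be making the cyclic bookkeeping precise: unlike in Lemma~\ref{lem:constraint}, there is no "source block" $S(u,1,n)$ of in-degree $0$, so I should take a little care to justify that in the cyclic $H(u,n)$ every vertex has in-degree at least $1$ (which is exactly what makes the "$\alpha$ in-edges are already all used" counting argument work cleanly), and similarly that the partition of $V(G_A^n)$ into blocks $S(u,i,n)$ still holds cyclically. Given those, the argument above is essentially a one-paragraph adaptation. Note also that here we obtain the stronger-looking conclusion $u_{k+n}=\Diamond$ directly, with no case distinction on $\alpha$ and no "$n$-th coordinate" subtlety, precisely because in the cyclic setting the contradiction comes just from the impossibility of $v'\notin S(u,k+1,n)$ rather than from comparing with a source block.
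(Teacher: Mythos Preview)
Your proposal is correct and follows essentially the same argument as the paper: assume $u_{k+n}\neq\Diamond$, use Observation~\ref{obs:Hu-degrees} to see that each $v\in S(u,k+1,n)$ absorbs all $\alpha$ in-edges from $\Gamma^-(v)$, then invoke Observation~\ref{obs:common-neighbors} and the fact that in the cyclic $H(u,n)$ every vertex has in-degree at least $1$ to derive a contradiction for the companion vertices $v'=v_1\cdots v_{n-1}x'$. The only cosmetic slip is the sentence ``the $\alpha$ in-edges of each such $v'$ in $G_A^n$ must coincide with the $\alpha$ in-edges already used by $v$'' --- edges with different heads are different edges; what you mean (and what the paper uses) is that the \emph{sources} $\Gamma^-(v')=\Gamma^-(v)$ coincide and each such source has already spent its unique $H(u,n)$-out-edge on $v$, leaving $v'$ with in-degree $0$.
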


\begin{proof}
Suppose that $u_k=\Diamond$ and $u_{k+n}\neq \Diamond$.
By Observation~\ref{obs:Hu-degrees}, each vertex in the set $S(u,k+1,n)$ has in-degree $\alpha$ in $H(u,n)$, and each vertex in $S(u,k,n)$ has out-degree 1.
By Observation~\ref{obs:common-neighbors}, for each $v=v_1v_2\cdots v_n\in S(u,k+1,n)$ there are $\alpha-1$ other vertices (different from the ones in $S(u,k+1,n)$) in $G_A^n$ with the same set $\Gamma^-(v)$ of $\alpha$ many in-neighbors, namely $v_x:=v_1\cdots v_{n-1}x$, where $x\in A\setminus \{v_n\}$.
As the in-degree of every vertex of $G_A^n$ is exactly $\alpha$, and in $H(u,n)$ all vertices already have in-degree at least 1, it follows that the vertices $v_x$ can not be part of $H(u,n)$, a contradiction to the fact that $H(u,n)$ is a spanning subgraph of $G_A^n$.
\end{proof}

Lemma~\ref{cor:cyclic} immediately yields the following corollary, which captures various rather severe conditions that a cyclic \upword{} must satisfy, relating its length $N$, the size $\alpha$ of the alphabet, and the value of the parameter $n$.

\begin{corollary}
\label{cor:cyclic}
Let $u=u_1u_2\cdots u_N$ be a cyclic \upword{} for $A^n$, where $A=\{0,1,\ldots,\alpha-1\}$ and $n\geq 2$, with at least one $\Diamond$.
Then we have $N=\alpha^{n-d}$ for some $d$, $1\leq d\leq n-1$, such that $n$ divides $dN$.
\end{corollary}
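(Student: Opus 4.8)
The statement to prove is Corollary~\ref{cor:cyclic}, whose key ingredient is Lemma~\ref{lem:constraint-cyclic}: in a cyclic \upword{} $u=u_1u_2\cdots u_N$, if $u_k=\Diamond$ then $u_{k+n}=\Diamond$, where indices are taken modulo $N$. I would derive the corollary from this lemma by analyzing the combinatorial structure it forces on the set of positions of $\Diamond$s in $u$.

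\textbf{Main steps.} First, let $D\subseteq\{1,2,\ldots,N\}$ denote the set of positions of $\Diamond$s in $u$, which is nonempty by hypothesis. By Lemma~\ref{lem:constraint-cyclic}, $D$ is closed under adding $n$ modulo $N$, i.e., $k\in D\implies k+n\in D$ (mod $N$). Since the map $k\mapsto k+n$ on $\mathbb{Z}_N$ is a bijection with finite orbits, closure under this map forces $D$ to be a union of orbits, and each orbit of $k\mapsto k+n$ on $\mathbb{Z}_N$ has size $N/\gcd(N,n)$. In particular $|D|$ is a multiple of $N/\gcd(N,n)$, and more importantly, if position $k$ carries a $\Diamond$ then so does every position in its orbit, i.e., every position $k+tn\pmod N$ for $t\in\mathbb{Z}$. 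Next, I count factors. Every length-$n$ window of $u$ that contains at least one $\Diamond$ contributes at least $\alpha\geq 2$ distinct factors, but a universal word has each factor exactly once; the standard way (already used implicitly in the paper via the subgraph $H(u,n)$) to make this precise is: the sets $S(u,i,n)$ partition $A^n$, so $\sum_{i=1}^N |S(u,i,n)| = \alpha^n$, and $|S(u,i,n)|=\alpha^{d_i}$ where $d_i$ is the number of $\Diamond$s in the $i$-th window. Combining this with the orbit structure of $D$ I want to conclude that in fact \emph{every} window contains the same number $d$ of $\Diamond$s: indeed, because $D$ is a union of full orbits of the shift-by-$n$ map, shifting a window by $n$ positions neither gains nor loses a $\Diamond$ (a $\Diamond$ leaving one end is matched by a $\Diamond$ entering at distance $n$), so $d_i$ is invariant under $i\mapsto i+n$; running over the orbit of $1$ shows $d_i$ is constant on that orbit, and a short additional argument (or simply summing $\alpha^{d_i}=\alpha^n/N$ forces all $d_i$ equal since they're all at most $n$ and the partition identity pins down the common value) gives $d_i=d$ for all $i$ with $\alpha^d = \alpha^n/N$, hence $N=\alpha^{n-d}$. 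Since $D$ is nonempty we have $d\geq 1$, and since not all letters can be $\Diamond$ (else no honest factor appears, impossible for $n\geq 2$ and $\alpha\geq 2$) we have $d\leq n-1$. Finally, $|D| = dN/n$ counted another way: summing $d_i$ over all $N$ windows counts each $\Diamond$ exactly $n$ times (each $\Diamond$ lies in exactly $n$ cyclic windows of length $n$), so $nN \cdot \text{(average)} $— more precisely $\sum_{i=1}^N d_i = n|D|$, and the left side is $dN$, whence $|D| = dN/n$; since $|D|$ is an integer, $n$ divides $dN$.

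\textbf{Expected obstacle.} The technically delicate point is establishing that $d_i$ is \emph{constant} over all $i$, not merely over each orbit of the shift map. The cleanest route I anticipate is to avoid this subtlety altogether: argue directly from $\sum_{i=1}^N \alpha^{d_i} = \alpha^n$ together with the fact that $d_i\geq d_{\min}\geq 1$. Actually the partition identity alone plus $d_i \le n$ for all $i$ does not instantly give constancy, so I would instead lean on the window-shift invariance: since $D$ is a union of orbits, the multiset $\{d_1,\ldots,d_N\}$ is invariant under the cyclic shift $i\mapsto i+n$, hence (as $\langle n\rangle$ acting on $\mathbb{Z}_N$ has all orbits of equal size $N/\gcd(N,n)$, and $\gcd(N,n)$ orbits) each value $d_i$ is repeated with multiplicity a multiple of $N/\gcd(N,n)$; combined with $\sum \alpha^{d_i}=\alpha^n$ and convexity of $t\mapsto\alpha^t$ this pins the $d_i$ down. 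I expect this convexity/counting juggling to be the one place requiring genuine care; everything else is the orbit bookkeeping described above. The divisibility conclusion $n\mid dN$ and the form $N=\alpha^{n-d}$ then fall out immediately as in the last step.

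\textbf{Remark on the reference.} I note the corollary's statement cites ``Lemma~\ref{cor:cyclic}'', which appears to be a typo for Lemma~\ref{lem:constraint-cyclic}; I would use the latter throughout.
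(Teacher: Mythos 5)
Your overall route is the paper's: invoke Lemma~\ref{lem:constraint-cyclic} to see that the set $D$ of $\Diamond$-positions is a union of orbits of the shift $k\mapsto k+n$ on $\mathbb{Z}_N$, deduce that every length-$n$ window contains the same number $d$ of $\Diamond$s, obtain $N\alpha^d=\alpha^n$ from the partition $\sum_i|S(u,i,n)|=\alpha^n$, and extract the divisibility condition. Your double count $\sum_{i=1}^N d_i=n|D|=dN$ for the last step is correct and in fact a little slicker than the paper's argument via ``blocks with the same $\Diamond$ pattern''. The one genuine gap is precisely the step you flag as delicate: you only establish $d_{i+n}=d_i$, i.e.\ constancy of $d_i$ on each orbit of the shift by $n$, and the repair you propose --- multiset invariance of $\{d_1,\dots,d_N\}$ together with $\sum_i\alpha^{d_i}=\alpha^n$ and convexity --- does not close it. A sum of powers of $\alpha$ can equal a power of $\alpha$ without all exponents being equal (e.g.\ $2+2+4=8$), and Jensen only yields $\sum_i\alpha^{d_i}\geq N\alpha^{\bar d}$ with $\bar d=\tfrac{1}{N}\sum_i d_i$; turning this into an equality that forces constancy would require already knowing $N=\alpha^{n-\bar d}$, which is circular.

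The fix is immediate from what you have already set up, and it is what the paper does implicitly. Since $D$ is a finite set closed under the bijection $k\mapsto k+n$, it is a union of orbits, so $i\in D\iff i+n\in D$ for \emph{every} $i$. Now compare the windows starting at positions $i$ and $i+1$: the symbol at position $i$ leaves and the symbol at position $i+n$ enters, and these two positions have the same $\Diamond$-status, hence $d_{i+1}=d_i$ for all $i$ --- constancy over all windows, not just over one orbit. (Equivalently: the $\Diamond$-pattern is constant on residue classes modulo $g:=\gcd(n,N)$, and since $g$ divides $n$, every window of length $n$ meets each residue class equally often.) With constancy in hand, $N\alpha^d=\alpha^n$ gives $N=\alpha^{n-d}$, nonemptiness of $D$ gives $d\geq 1$, the impossibility of every position being a $\Diamond$ gives $d\leq n-1$, and your double count gives $n\mid dN$.
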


\begin{proof}
By Lemma~\ref{lem:constraint-cyclic}, for any $\Diamond$ in $u$, the other two symbols in distance $n$ from it must be $\Diamond$s as well.
Thus, the indices $1,2,\ldots,N$ are partitioned into $\gcd(n,N)$ many residue classes modulo $n$, and all symbols at positions from the same residue class are either all $\Diamond$s or all letters from $A$.
Let $d$ denote the number of $\Diamond$s among any $n$ consecutive symbols of $u$, then we have $1\leq d\leq n-1$ (there is at least one $\Diamond$, but not all letters can be $\Diamond$s), and any starting position in $u$ gives rise to $\alpha^d$ different factors, implying that $N=\alpha^{n-d}$.
Furthermore, the $d$ many $\Diamond$s within any $n$ consecutive letters of $u$ are partitioned into $n/\gcd(n,N)$ many blocks with the same $\Diamond$ pattern, so $n/\gcd(n,N)$ must divide $d$, and this condition is equivalent to $n$ dividing $d\gcd(n,N)$ and to $n$ dividing $dN$.
\end{proof}

As an immediate corollary of our last result, we can exclude the existence of cyclic \upwords{} for many combinations of $\alpha$ and $n$.

\begin{corollary}
\label{cor:cyclic-div}
Let $A=\{0,1,\ldots,\alpha-1\}$ and $n\geq 2$.
If $\gcd(\alpha,n)=1$, then there is no cyclic \upword{} for $A^n$.
In particular, for $\alpha=2$ and odd $n$, there is no cyclic \upword{} for $A^n$.
\end{corollary}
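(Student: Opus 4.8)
The plan is to read off Corollary~\ref{cor:cyclic-div} as a short arithmetic consequence of Corollary~\ref{cor:cyclic}, which already contains all the combinatorial substance. Assume for contradiction that a cyclic \upword{} $u=u_1u_2\cdots u_N$ for $A^n$ exists; so that Corollary~\ref{cor:cyclic} is applicable we take $u$ to contain at least one $\Diamond$. That corollary then produces an integer $d$ with $1\leq d\leq n-1$ such that $N=\alpha^{n-d}$ and $n\mid dN$.

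Next I would bring in the hypothesis $\gcd(\alpha,n)=1$, which forces $\gcd(\alpha^{n-d},n)=1$, i.e.\ $\gcd(N,n)=1$. Together with $n\mid dN$ and the elementary cancellation rule (if $\gcd(N,n)=1$ and $n\mid dN$ then $n\mid d$), this gives $n\mid d$; but $1\leq d\leq n-1$ means $0<d<n$, so $n\mid d$ is impossible. This contradiction shows that no cyclic \upword{} for $A^n$ exists whenever $\gcd(\alpha,n)=1$. The ``in particular'' statement is then immediate, since for $\alpha=2$ the condition $\gcd(\alpha,n)=1$ holds exactly when $n$ is odd.

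I do not anticipate any real obstacle: all the difficulty has been absorbed into Lemma~\ref{lem:constraint-cyclic} and Corollary~\ref{cor:cyclic} (themselves resting on Observations~\ref{obs:common-neighbors} and \ref{obs:Hu-degrees}), so only the cancellation rule and the range bound $0<d<n$ remain to be checked. The single point worth a sentence of care is the scope of the word ``\upword{}'' here: one should make explicit that the claim is about words containing at least one $\Diamond$, so that the hypotheses of Corollary~\ref{cor:cyclic} are met verbatim and the degenerate readings are not in play.
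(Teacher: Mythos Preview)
Your proposal is correct and matches the paper's own argument essentially line for line: assume a cyclic \upword{} exists, invoke Corollary~\ref{cor:cyclic} to obtain $N=\alpha^{n-d}$ with $1\le d\le n-1$ and $n\mid dN$, use $\gcd(\alpha,n)=1$ to cancel $N$ and deduce $n\mid d$, and derive a contradiction from $d<n$. Your closing remark about needing at least one $\Diamond$ to invoke Corollary~\ref{cor:cyclic} is a point the paper leaves implicit, but it is well taken.
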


\begin{proof}
Suppose that such \an{} \upword{} $u=u_1u_2\cdots u_N$ exists.
Then by Corollary~\ref{cor:cyclic} we have $N=\alpha^{n-d}$ for some $d$, $1\leq d\leq n-1$, such that $n$ divides $dN$.
However, as $\gcd(\alpha,n)=1$, $n$ does not divide $N=\alpha^{n-d}$, so $n$ must divide $d$, which is impossible, yielding a contradiction.
\end{proof}

By Corollaries~\ref{cor:cyclic} and \ref{cor:cyclic-div}, for a binary alphabet ($\alpha=2$), the only remaining potential parameter values for cyclic \upwords{} are $n=2$ and $d=1$, $n=4$ and $d\in\{1,2\}$, $n=6$ and $d=3$, $n=8$ and $d\in\{1,2,\ldots,6\}$, $n=10$ and $d=5$, $n=12$ and $d\in\{3,6,9\}$, etc.
The case $n=2$ and $d=1$ can be easily exluded: w.l.o.g.\ such a word has the form $\Diamond 0$, leading to the factor 00 appearing twice (and 11 does not appear as a factor at all).
However, for $n=4$ and $d=1$ we have the cyclic \upword{} $\Diamond 001\Diamond 110$, which we already mentioned in the introduction.
This is the only cyclic \upword{} for a binary alphabet that we know of.
Cyclic \upwords{} for any even alphabet size $\alpha\geq 4$ and $n=4$ have been constructed in the follow-up paper \cite{kirsch:16}.

\section{Outlook}
\label{sec:outlook}

In this paper we initiated the systematic study of universal partial words, and we hope that our results and the numerous examples of \upwords{} provided in the tables (see also the extensive data available on the website \cite{www}) generate substantial interest for other researchers to continue this exploration, possibly in one of the directions suggested below.

Concerning the binary alphabet $A=\{0,1\}$, it would be interesting to achieve complete classification of linear \upwords{} containing a single $\Diamond$, as suggested by Conjecture~\ref{conj:single-diamond}.
For two $\Diamond$s such a task seems somewhat more challenging (recall Table~\ref{tab:upwords2}, Theorem~\ref{thm:two-diamonds-non} and see the data from \cite{www}).
Some examples of binary linear \upwords{} with three $\Diamond$s are listed in Table~\ref{tab:upwords3}, and deriving some general existence and non-existence results for this setting would certainly be of interest.

\begin{table}
\begin{center}
\begin{tabular}{l|l}
$n$   & \\\hline
3     & $\Diamond \Diamond \Diamond $ \\ \hline
4     & $\Diamond \Diamond \Diamond 01111$ (Thm.~\ref{thm:nm1-diamonds}) \\
      & $\Diamond \Diamond 001\Diamond 11010$ \\
      & $\Diamond 001\Diamond 110\Diamond 00$ \\
      & $0\Diamond 001\Diamond 110\Diamond 0$ \\ \hline
5     & $\Diamond 0010\Diamond 0111\Diamond 10011011000001$ \\
      & $\Diamond 0000111\Diamond 10001001101100101\Diamond 1$ \\
      & $\Diamond 00001110\Diamond 100010100110101111\Diamond $ \\
      & $\Diamond 0000100111\Diamond 10001101100101\Diamond 1$ \\
      & $\Diamond 0000101110\Diamond 1000110101001111\Diamond $ \\
      & $\Diamond 00001111101\Diamond 10001011001\Diamond 01$ \\
      & $\Diamond 000010101110\Diamond 10001101001111\Diamond $ \\
      & $\Diamond 0000101001110\Diamond 1000110101111\Diamond $ \\
      & $\Diamond 00001101100111\Diamond 1000100101\Diamond 1$ \\
      & $\Diamond 0000110101001110\Diamond 1000101111\Diamond $ \\
      & $\Diamond 00001101100100111\Diamond 1000101\Diamond 1$ \\
      & $\Diamond 000010010101111100\Diamond 1101\Diamond 00$ \\
      & $0\Diamond 1100\Diamond 001111101101000101\Diamond 1$ \\
\end{tabular}
\caption{Examples of linear \upwords{} for $A^n$, $A=\{0,1\}$, with three $\Diamond$s for $n=3,4,5$.}
\label{tab:upwords3}
\end{center}
\end{table}


The next step would be to consider the situation of more than three $\Diamond$s present in a linear \upword{}.
The following easy-to-verify example in this direction was communicated to us by Rachel Kirsch \cite{kirsch:16}.

\begin{theorem}
\label{thm:nm1-diamonds}
For $A=\{0,1\}$ and any $n\geq 2$, $\Diamond^{n-1}01^n$ is a linear \upword{} for $A^n$ with $n-1$ many $\Diamond$s.
\end{theorem}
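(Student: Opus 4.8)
The plan is to verify directly that $u:=\Diamond^{n-1}01^n$ is a linear \upword{} for $A^n$ by exhibiting, for each word $v\in A^n$, the unique starting position in $u$ at which $v$ appears as a factor. Note first that $|u|=(n-1)+1+n=2n$, so the candidate starting positions are $i=1,2,\ldots,n+1$, giving exactly $n+1$ factors-with-multiplicity; since $|A^n|=2^n$ we cannot simply count, so we really need the explicit bijective description. I would organize the factors by how many leading $\Diamond$s they contain: the factor starting at position $i$ (for $1\le i\le n$) consists of $n-i+1$ copies of $\Diamond$ followed by the length-$(i-1)$ prefix of the tail $01^n$, namely $01^{i-2}$ for $i\ge 2$ and the empty suffix for $i=1$; the factor starting at position $n+1$ is the all-ones tail segment, i.e. $1^n$ comes from position $n+1$, and in fact position $i=n+1$ reads $1^n$ while position $i=n$ reads $\Diamond 1^{n-1}$, etc. Let me re-index: the subword of length $n$ starting at position $i$ is $\Diamond^{\max(n-i,0)}$ followed by the appropriate window of $01^n$.

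The key step is then to show these windows cover $A^n$ exactly once. For $i=1$ the factor is $\Diamond^{n-1}0$, whose substitutions $S(u,1,n)$ are all words ending in $0$, i.e. the $2^{n-1}$ words $A^{n-1}0$. For $i$ with $2\le i\le n$, the subword is $\Diamond^{n-i}01^{i-1}$, whose substitutions $S(u,i,n)$ are exactly the words of the form $w01^{i-1}$ with $w\in A^{n-i}$ arbitrary — that is, words whose last $i-1$ letters are $1$, whose $i$-th-from-last letter is $0$. Finally for $i=n+1$ the subword is $1^n$, a single word. I would argue that the sets $\{A^{n-1}0\}$, $\{A^{n-i}01^{i-1}: 2\le i\le n\}$, and $\{1^n\}$ partition $A^n$: given any $v\in A^n$, if $v=1^n$ it sits in the last class; otherwise let $j$ be the position of the last $0$ in $v$, so $v=v_1\cdots v_{j-1}01^{n-j}$ with $0\le n-j\le n-1$, and then $v\in S(u,i,n)$ for $i=n-j+1$, and this $i$ lies in $\{1,\ldots,n\}$; moreover the index $i$ is uniquely determined by $v$ since the location of the last $0$ is unique. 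This establishes that every $v\in A^n$ appears exactly once, so $u$ is a linear \upword{}.

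An alternative, perhaps slicker, route uses the machinery of Section~\ref{sec:prelim}: by Observation~\ref{obs:Hu-degrees} the graph $H(u,n)$ for $u=\Diamond^{n-1}01^n$ has $S(u,1,n)$ of size $2^{n-1}$ (in-degree $0$, out-degree $1$ at each vertex since $u_{n+1}=0\in A$), then the sizes halve as we move right: $|S(u,i,n)|=2^{n-i}$ for $1\le i\le n$ and $|S(u,n+1,n)|=1$, with all the $\Diamond$-induced branching happening "backwards" so that $H(u,n)$ is essentially a complete binary in-tree of depth $n-1$ with one extra pendant edge (the leaf $1^n$), exactly as the paper already notes for the $n=3$ example $\Diamond\Diamond 0111$. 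One checks $\sum_{i=1}^{n+1}|S(u,i,n)| = 2^{n-1}+2^{n-2}+\cdots+2+1+1 = 2^n$, matching $|A^n|$, and that the sets are genuinely disjoint (a word in $S(u,i,n)$ has exactly $i-1$ trailing $1$s for $i\le n$, or is $1^n$), which gives the partition directly.

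I do not expect any real obstacle here: the statement is genuinely "easy to verify" as the paper says, and the only thing requiring care is getting the indexing of the windows right and confirming the trailing-$1$s invariant cleanly separates the sets $S(u,i,n)$ — the bookkeeping around the boundary cases $i=1$ (factor $\Diamond^{n-1}0$) and $i=n,n+1$ (factors $\Diamond 1^{n-1}$ and $1^n$) is the one place a sign/off-by-one slip could creep in, so I would write those cases out explicitly rather than folding them into the general formula.
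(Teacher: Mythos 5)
Your verification is correct, and it supplies exactly the argument the paper omits: the paper only asserts this example is ``easy to verify'' (crediting Kirsch) and gives no proof, so there is no paper proof to compare against. Your partition of $A^n$ by the number of trailing $1$s --- words with $j$ trailing $1$s, $0\le j\le n-1$, landing uniquely in the window $\Diamond^{n-1-j}01^{j}$ starting at position $j+1$, and $1^n$ landing in the final window --- together with the count $2^{n-1}+\cdots+2+1+1=2^n$ is precisely the intended direct check. The only blemishes are cosmetic: your first description of the windows has an off-by-one ($n-i+1$ diamonds instead of $n-i$) that you immediately correct, and in the alternative argument you write $u_{n+1}=0$ where in fact $u_n=0$ and $u_{n+1}=1$ (what matters, and what you use, is only that $u_{n+1}\in A$); neither affects the proof.
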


Complementing Theorem~\ref{thm:nm1-diamonds}, we can prove the following non-existence result in this direction, but it should be possible to obtain more general results.

\begin{theorem}
\label{thm:diamonds-start}
For $A=\{0,1\}$, any $n\geq 4$ and any $2\leq d\leq n-2$, there is no linear \upword{} for $A^n$ that begins with $\Diamond^d x_{d+1}x_{d+2}\ldots x_{n+2}$ with $x_i\in A$ for all $i=d+1,\ldots,n+2$.
\end{theorem}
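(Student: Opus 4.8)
The plan is to analyze the structure forced on a hypothetical \upword{} $u = \Diamond^d x_{d+1}\cdots x_N$ by combining a counting argument (which rules out the case where the first $\Diamond$ block is ``too isolated'') with repeated applications of Lemma~\ref{lem:constraint}. The key observation is that the prefix consists of $d$ consecutive $\Diamond$s followed by at least $n+2 \geq d+4$ symbols from $A$, so in particular the first $\Diamond$ (at position $1$) is followed by at least $n$ symbols from $A$ once we account for all $d$ jokers; more precisely, the $\Diamond$ at position $j$ for $j=1,2,\ldots,d$ is followed by the letters $x_{d+1}x_{d+2}\cdots$, and since $d \leq n-2$ we have $j + n \leq d + n \leq N$ with $u_{j+n} \in A$ whenever $j+n > d$. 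So Lemma~\ref{lem:constraint} applies to each such $\Diamond$ at position $j$ with $j + n \geq d+1$, i.e.\ $j \geq d+1-n$, which is all $j \in \{1,\ldots,d\}$ since $d \leq n-1$.

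First I would apply Lemma~\ref{lem:constraint} with $k = d$ (the last $\Diamond$): since $u_d = \Diamond$ and $u_{d+n} = x_{d+n} \in A$ (using $d+n \leq 2n-2 < N$ for $n\geq 4$), the lemma gives $u_{d+i} = u_i$ whenever $u_i \neq \Diamond$ for $i=1,\ldots,n-1$; but $u_1 = \cdots = u_d = \Diamond$, so this only constrains $i = d+1,\ldots,n-1$, yielding $x_{d+1+i'} = x_{1+i'}$ — wait, more carefully: $u_i = \Diamond$ for $i \leq d$ so no constraint there, and for $d+1 \leq i \leq n-1$ we get $x_{d+i} = u_{d+i} = u_i = x_i$. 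Additionally, if $u_n = x_n \in A$ (true since $d \leq n-2 < n$), the ``moreover'' part forces $\alpha = 2$ (automatic here) and $x_{d+n} = \ol{x_n}$. Now I would iterate: apply the lemma with $k = d-1, d-2, \ldots, 1$. Applying it at $k = d - t$ gives $x_{d-t+i} = x_i$ for $\max(d+1-(d-t), \text{indices that are letters}) \leq i \leq n-1$, i.e.\ $x_{d-t+i} = x_i$ for $t+1 \leq i \leq n-1$, together with $x_{d-t+n} = \ol{x_n}$. Taking $t$ from $0$ up to $d-1$, these say the word $x_{d+1}x_{d+2}\cdots$ is ``$d$-periodic'' in its first several positions and, crucially, collecting the relations $x_{d+n} = \ol{x_n}$, $x_{d-1+n} = \ol{x_n}$ (reindexed appropriately), etc., forces contradictory values. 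Concretely, comparing the constraint from $k=d$ (namely $x_{d+n} = \ol{x_n}$) with the periodicity relations $x_{d+i} = x_i$ for $i$ in the valid range and the constraint from $k = d-1$ (which after reindexing touches $x_{d-1+n}$) should pin down $x_{n}$ and one of its periodic translates to be both equal and complementary.

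The cleanest way to extract the contradiction, and the step I expect to be the main obstacle, is organizing the overlapping index ranges correctly so that two of the forced relations collide. I would set it up as follows: from $k=d$ we get, for the index $i = n-1$ (valid since $n-1 \geq d+1$ iff $d \leq n-2$, which holds), that $x_{d+n-1} = x_{n-1}$, and from $k = d-1$ applied with $i = n$ in its ``moreover'' clause we would get a complementation relation on $x_{d-1+n}= x_{d+n-1}$; chaining these yields $x_{n-1}$ forced to two incompatible values, or else produces a repeated length-$n$ factor among the positions $1,2,\ldots$ (two starting positions whose length-$n$ windows are forced identical), exactly as in the proofs of Theorems~\ref{thm:alpha3} and \ref{thm:diamond-at-pos-n}. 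If the counting bound is needed as a preliminary (to handle the degenerate possibility that $u$ is so short that $k+n > N$ for some of the jokers), I would note that with only $d \leq n-2$ jokers and the constraint $N = 2^n + n - 1$, every $\Diamond$ at position $\leq d$ does satisfy $k + n \leq N$ since $d + n \leq 2n - 2 < 2^n + n - 1$ for $n \geq 4$, so no separate counting argument is actually required and the whole proof is a bookkeeping exercise on Lemma~\ref{lem:constraint}.
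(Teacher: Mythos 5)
There is a genuine gap: you repeatedly apply Lemma~\ref{lem:constraint} at positions $k=d, d-1,\ldots,1$, but the lemma requires $u_{k+n}\neq\Diamond$, and the theorem's hypothesis only guarantees that $u_j\in A$ for $d+1\leq j\leq n+2$. Your assertion that ``$u_{j+n}\in A$ whenever $j+n>d$'' confuses \emph{the position exists and lies beyond the $\Diamond$-block} with \emph{the position is known to hold a letter}: for $k\geq 3$ the index $k+n$ exceeds $n+2$, where the word is unconstrained and may well contain further $\Diamond$s. Since you organize the final contradiction around the applications at $k=d$ and $k=d-1$, your argument is only justified when $d+n\leq n+2$, i.e.\ $d=2$; for $3\leq d\leq n-2$ the two key applications are exactly the ones you are not entitled to make. (A secondary issue is that the proposal never actually states the contradiction; it ends with ``should pin down \ldots incompatible values.'')

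The repair is to use only the first and second $\Diamond$, which is precisely why the statement specifies the prefix through position $n+2$ and no further: $u_{1+n}=x_{n+1}$ and $u_{2+n}=x_{n+2}$ are the only translates guaranteed to be letters. Applying Lemma~\ref{lem:constraint} at $k=1$ gives $x_{i+1}=x_i$ for $i=d+1,\ldots,n-1$, hence $x_{d+1}=x_{d+2}=\cdots=x_n$, and (since $u_n=x_n\in A$ because $d\leq n-2$) also $x_{n+1}=\ol{x_n}$. Applying it at $k=2$ (using $d\geq 2$) with $i=n-1$ gives $x_{n+1}=x_{n-1}$; since $n-1\geq d+1$, the chain from $k=1$ gives $x_{n-1}=x_n$, so $x_{n+1}=x_n$, contradicting $x_{n+1}=\ol{x_n}$. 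This two-step argument is the proof the paper intends, and it is where your bookkeeping needs to land.
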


The proof of Theorem~\ref{thm:diamonds-start} is easy by applying Lemma~\ref{lem:constraint} to the first and second $\Diamond$.
We leave the details to the reader.

It would also be interesting to find examples of binary cyclic \upwords{} other than $\Diamond 001\Diamond 110$ for $n=4$ mentioned before.

Finally, a natural direction would be to search for (linear or cyclic) \upwords{} for {\em non-binary} alphabets, but we anticipate that no non-trivial \upwords{} exist in most cases (recall Theorem~\ref{thm:alpha3}).
As evidence for this we have the following general non-existence result in this setting.

\begin{theorem}
\label{thm:alpha3p}
For $A=\{0,1,\ldots,\alpha-1\}$, $\alpha\geq 3$, and any $d\geq 2$, for large enough $n$ there is no linear or cyclic \upword{} for $A^n$ with exactly $d$ many $\Diamond$s.
\end{theorem}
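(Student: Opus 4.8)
The plan is to handle the linear and the cyclic case separately, reducing each to the counting and divisibility tools already in place, together with the mechanism behind the proof of Theorem~\ref{thm:alpha3}.

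For the linear case I would argue as follows. Let $u$ be a hypothetical linear \upword{} for $A^n$ with exactly $d$ many $\Diamond$s, and factor it as $u=B_0\Diamond B_1\Diamond\cdots\Diamond B_d$ with $B_0,\dots,B_d\in A^{*}$. Since every length-$n$ window of $u$ contains at most $d$ of the $\Diamond$s and hence yields at most $\alpha^d$ words of $A^n$ under substitution, while all $\alpha^n$ words must occur, we get $(|u|-n+1)\alpha^d\ge\alpha^n$, so $|u|\ge\alpha^{n-d}$, which is exponentially large for fixed $d$. Using the normalization $|B_0|\le|B_d|$ from Section~\ref{sec:prelim}, if all of $B_1,\dots,B_d$ had length at most $n-1$, then so would $B_0$, forcing $|u|\le(d+1)n$, which is impossible for $n$ large. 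Hence some $B_j$ with $1\le j\le d$ has $|B_j|\ge n$; in particular the $\Diamond$ immediately preceding $B_j$, say at position $k$, satisfies $u_{k+1},\dots,u_{k+n}\in A$ and $k+n\le|u|$. Now Lemma~\ref{lem:constraint} applies to this $\Diamond$, and since $\alpha\ge 3$ its ``moreover'' clause forces $u_n=\Diamond$, while its first conclusion gives $u_i=\Diamond$ or $u_{k+i}=u_i$ for $1\le i\le n-1$. Consider $w:=u_{k+1}u_{k+2}\cdots u_{k+n}\in A^n$. It occurs as a factor at position $k+1$ (literally), and it also occurs at position $1$: indeed $u_n=\Diamond$, and for each non-$\Diamond$ position $i\le n-1$ we already have $u_i=u_{k+i}=w_i$, so the window $u_1\cdots u_n$ can be substituted to yield $w$. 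Since $k\ge 1$ these are two distinct occurrences, contradicting universality.

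For the cyclic case I would start from Corollary~\ref{cor:cyclic}: a cyclic \upword{} for $A^n$ has length $|u|=\alpha^{n-d'}$ for some $1\le d'\le n-1$, where $d'$ is the number of $\Diamond$s in any length-$n$ window, and (as in the proof of that corollary) the total number of $\Diamond$s equals $d'|u|/n$. Setting this equal to $d$ gives $d'\alpha^{n-d'}=dn$. Writing $m:=n-d'\in\{1,\dots,n-1\}$ this becomes $(n-m)\alpha^m=dn$, whence $\alpha^m>d$ and $n=\frac{m\alpha^m}{\alpha^m-d}=m+\frac{md}{\alpha^m-d}$. For all sufficiently large $m$ the fractional term lies strictly between $0$ and $1$, so $n$ would lie strictly between the consecutive integers $m$ and $m+1$, which is impossible; hence $m$, and therefore $n=m\alpha^m/(\alpha^m-d)\le m\alpha^m$, is bounded in terms of $\alpha$ and $d$. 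Thus for $n$ large there is no cyclic \upword{} for $A^n$ with exactly $d$ many $\Diamond$s, and taking the maximum of the two thresholds proves the theorem.

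The argument is essentially a quantitative strengthening of the proof of Theorem~\ref{thm:alpha3}, so I do not expect a serious obstacle. The one point that needs care is establishing, via the factor count and the normalization $|B_0|\le|B_d|$, that a $\Diamond$-free stretch of length $\ge n$ occurs \emph{strictly after} some $\Diamond$ (rather than merely as the suffix of $u$), since that is exactly what makes Lemma~\ref{lem:constraint} applicable in the forward direction; once this is in place the repeated-factor conclusion is immediate. The cyclic case only requires the elementary observation that $n=m\alpha^m/(\alpha^m-d)$ is non-integral once $m$ is large.
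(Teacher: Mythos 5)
Your proof is correct and follows the approach the paper sketches (the paper itself omits the details): for the linear case you locate a $\Diamond$ followed by $n$ letters of $A$ via the counting bound $|u|\ge\alpha^{n-d}$ together with the normalization $|B_0|\le|B_d|$, and then Lemma~\ref{lem:constraint} with $\alpha\ge 3$ forces $u_n=\Diamond$ and produces the repeated factor exactly as in the proof of Theorem~\ref{thm:alpha3}. For the cyclic case, where Lemma~\ref{lem:constraint-cyclic} forbids any $\Diamond$/letter pair at distance $n$, your arithmetic via Corollary~\ref{cor:cyclic} (the total number of $\Diamond$s equals $d'N/n$, so $n=m\alpha^m/(\alpha^m-d)$ is non-integral once $m$ is large, bounding $n$) is a correct and in fact necessary completion of the paper's one-line sketch.
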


Theorem~\ref{thm:alpha3p} shows in particular that for a fixed alphabet size $\alpha$ and a fixed number $d\geq 2$ of diamonds, there are only finitely many possible candidates for \upwords{} with $d$ diamonds (which in principle could all be checked by exhaustive search).
The proof idea is that for fixed $d$ and large enough $n$, such \an{} \upword{} must contain a $\Diamond$ and a symbol from $A$ in distance $n$, and then applying Lemma~\ref{lem:constraint} or Lemma~\ref{lem:constraint-cyclic} yields a contradiction (recall the proof of Theorem~\ref{thm:alpha3}).
We omit the details here.
On the positive side, \upwords{} for even alphabet sizes $\alpha\geq 4$ and $n=4$ have been constructed in \cite{kirsch:16} (and these \upwords{} are even cyclic).

A question that we have not touched in this paper is the algorithmic problem of efficiently generating \upwords{}.
As a preliminary observation in this direction we remark here that some of the linear \upwords{} constructed in Theorem~\ref{thm:diamond-at-pos-1} and \ref{thm:diamond-at-pos-k} can also be obtained by straightforward modifications of the FKM de Bruijn sequences constructed in \cite{MR523071,MR855323}, for which efficient generation algorithms are known \cite{MR1176670}.

\acknowledgements

The authors thank Martin Gerlach for his assistance in our computer searches, Rachel Kirsch and her collaborators \cite{kirsch:16}, as well as Artem Pyatkin for providing particular examples of small \upwords{}.
The second author is grateful to Sergey Avgustinovich for helpful discussions on universal partial words, and to Bill Chen and Arthur Yang for their hospitality during the author's visit of the Center for Combinatorics at Nankai University in November 2015.
This work was supported by the 973 Project, the PCSIRT Project of the Ministry of Education and the National Science Foundation of China.
We also thank the anonymous referees of this paper for several valuable suggestions and references that helped improving the presentation.

\bibliographystyle{alpha}
\bibliography{refs}

\end{document}